 \newtheorem{prob}{Problem}[section]
 \newtheorem{ass}{Assumption}[section]
 \newtheorem{prop}{Proposition}[section]
 \newtheorem{lem}{Lemma}[section]
 \newtheorem{thm}{Theorem}[section]
 \newtheorem{coro}{Corollary}[section]
 \newtheorem{defn}{Definition}[section]
 \newtheorem{rem}{Remark}[section]
 \newtheorem{exam}{Example}[section]
\def\fnum@figure{\textcolor{subsectioncolor}{\sf Fig.~\thefigure}}
\def\fnum@table{\textcolor{subsectioncolor}{\sf TABLE~\thetable}}
\def\zz{\edef\zzz{\pdfliteral{\current@color}}}%pdftex version
\def\BibTeX{{\rm B\kern-.05em{\sc i\kern-.025em b}\kern-.08em
    T\kern-.1667em\lower.7ex\hbox{E}\kern-.125emX}}
\begin{document}
\title{Bang-Ride Optimal Control: Monotonicity, External Positivity, and Fast Battery Charging}
\author{Shengling Shi, \IEEEmembership{Member, IEEE},  Jacob Sass,
Jiaen Wu, Minsu Kim, Yingjie Ma, Sungho Shin, Rolf Findeisen, \IEEEmembership{Senior Member, IEEE}  Richard D. Braatz, \IEEEmembership{Fellow, IEEE}  
\thanks{Shengling Shi was with the Department of Chemical Engineering, Massachusetts Institute of Technology, USA, and is now with the Delft Center for Systems and Control, Delft University of Technology, the Netherlands (e-mail: s.shi-3@tudelft.nl). Jacob Sass, Minsu Kim, Yingjie Ma, Sungho Shin, and Richard D. Braatz are with the Department of Chemical Engineering, Massachusetts Institute of Technology, USA (e-mail: sassj@mit.edu; mskim77@mit.edu; yingjma@mit.edu; sushin@mit.edu; braatz@mit.edu). Jiaen Wu is with the Department of Mechanical Engineering, Stanford University, USA (e-mail: jiaenwu@stanford.edu), Rolf Findeisen is with the Department of Electrical Engineering and Information Technology, Technical University of Darmstadt (e-mail: rolf.findeisen@tu-darmstadt.de). 
 }  }

\maketitle 

\begin{abstract}
This work studies a class of optimal control problems with scalar inputs and general constraints, whose solutions follow a bang-ride pattern that always activates a constraint and enables efficient numerical computation. As a motivating example, fast battery charging leads to computationally demanding optimal control problems when detailed electrochemical models are used. Recently proposed optimization-free heuristics reduce this computational cost while producing input profiles observed in practice, following a bang-ride pattern and applying the maximum feasible input. We investigate when such heuristics satisfy necessary optimality conditions. By leveraging Pontryagin’s maximum principle, we unify and formalize existing insights on the bang-ride structure and on the optimal control attaining the maximum feasible input under monotonicity. We further establish a novel connection between the structured optimal control and the external positivity of the costate dynamics. These results provide a rigorous theoretical foundation for heuristic charging strategies and explain the efficiency of optimization-free algorithms.
\end{abstract}

\begin{IEEEkeywords}
Optimal control, battery charging, Pontryagin's maximum principle.
\end{IEEEkeywords}

\section{INTRODUCTION} \label{sec:Intro}
Many real-world control tasks can be formulated as optimal control problems with a single input, such as charging batteries \cite{berliner2022novel} or rapid thawing of frozen cells for cell therapy \cite{srisuma2024simulation}. However, solving the associated optimal control problems is often computationally demanding, requires significant resources and renders real-time implementation on limited platforms infeasible, despite recent advances in numerical optimal control \cite{andersson2019casadi}. This work considers optimal control problems whose structure allows one to obtain optimal or near-optimal solutions efficiently, without heavy online optimization. Under appropriate structural properties such as bang-ride behavior or monotonicity, the optimal input can often be determined by simple rules (e.g., applying the maximum feasible input) or validated a posteriori by simulation, enabling solutions that are orders of magnitude faster to compute than with conventional solvers. In particular, we take the fast battery charging problem as a motivating example, as the associated control profiles have these structural properties.

Charging batteries as quickly as possible without violating safety constraints and degrading battery health is crucial for a wide range of applications, such as electric cars and power storage. In practice, many input current profiles for fast charging are heuristic, based on the following concept \cite{li2021electrochemical}: They first apply the maximum current, and then the current is adjusted to maintain a series of active constraints. For example, the constant-current-constant-voltage (CC-CV) profile first applies the maximum current until the maximum allowed voltage is reached. The current is then adjusted to maintain the maximum voltage until the charging task is complete. Different charging profiles differ in the set of active constraints.  

Model-based optimal control offers an alternative for designing more advanced, degradation-aware charging profiles \cite{berliner2022novel}. Model-based control methods can optimally balance charging speed, safety, and battery lifetime by incorporating a detailed battery model and operation constraints. The bottleneck is the required resources to solve the resulting optimal control problems due to complex electrochemical battery models and nonlinear constraints. For example, with the implementation in \cite{galuppini2024efficient}, solving optimal charging problems using the gradient descent algorithm is estimated to take between $6$ and $80$ years using one Intel Core i7 CPU \cite{galuppini2024efficient}.

To address the computational bottleneck, a heuristic and optimization-free algorithm, called the \textit{hybrid simulation}, is proposed in \cite{berliner2022novel} to provide an approximate solution to the optimal control problem. Inspired by heuristic charging profiles in practice, the hybrid simulation approach to battery charging \cite{berliner2022novel} first applies the maximum current as the active (input) constraint. This active constraint and the battery model form a system of differential algebraic equations (DAEs) that are simulated. Then, whenever a new constraint becomes active during the simulation, such as an upper bound on the voltage, the active constraint is replaced by the new constraint to continue the simulation. This approach uses a single forward simulation to replace optimization, significantly reducing computational time. For example, it provides approximate solutions orders of magnitude faster than applying optimization solvers \cite{galuppini2024efficient}. However, the approximate solutions obtained can be suboptimal in some settings \cite{berliner2022novel}.
  
We study several theoretical problems motivated by the heuristic battery charging profiles in practice and hybrid simulation as discussed above. First, many charging profiles, including the solution of the hybrid simulation, activate a constraint at every time instant, either the maximum input bound or another constraint. This type of solution has been called \textit{bang-ride} control \cite{park2020optimal,taghavian2023selector}, a generalization of the classical \textit{bang-bang} control \cite{sussmann1979bang} that activates only the input bound. Second, while a general bang-ride control may switch between minimum and maximum input, the charging profiles and the hybrid simulation often apply the maximum feasible input. Finally, the answers to the above theoretical questions should lead to an understanding of when the practical charging profiles and the hybrid simulation are optimal. In summary this leads to the following core questions considered in this work:
\begin{itemize}
    \item \textit{Under what conditions does an optimal control problem lead to bang-ride control?}

    \item \textit{Under what conditions does the optimal control problem prefer the maximum input that is feasible?}

    \item \textit{When does the hybrid simulation satisfy the necessary optimality conditions?}
\end{itemize}

In the above questions, the optimal control in the second question is a special type of the general bang-ride control in the first question. In the third question, we focus on necessary optimality conditions, instead of sufficient conditions, following Pontryagin’s maximum principle (PMP).

\textit{Related Work:} The fast-charging problem has motivated several works to address some of the theoretical questions \cite{park2020optimal,taghavian2023selector,matschek2023necessary,drummond2023constrained}.  Bang-ride control has also been studied in process control \cite{benthack1997feedback,srinivasan2003dynamic}. The results in \cite{matschek2023necessary,benthack1997feedback,srinivasan2003dynamic} are related only to the first theoretical question, i.e., bang-ride control, generalizing the classical bang-bang theory \cite{sussmann1979bang,palanki1993synthesis}. However, (pure) state constraints, which do not depend on the input, are not considered in \cite{benthack1997feedback}, and no proof is provided in \cite{srinivasan2003dynamic}. It is well known in optimal control theory \cite{seierstad1986optimal,hartl1995survey} that state constraints require special treatment, compared to mixed constraints that depend on both the input and the state.

The result in \cite{park2020optimal} derives the optimal control analytically for a particular optimal charging problem with a linear constraint and shows that the resulting input is bang-ride. However, \cite{park2020optimal} does not provide a general theory for a wide class of control problems. Ref. \cite{taghavian2023selector} addresses the second question and observes that the monotonicity of the system, the objective, and the constraints play important roles. In particular, if the system is monotonic \cite{angeli2003monotone}, that is, a larger input leads to a larger state, and if an objective is minimized and is monotonically decreasing in the state, a maximum input is optimal when there is only the input bound. However, the effect of general constraints is more difficult to capture. The result in \cite{taghavian2023selector} shows that the maximum input remains optimal if the nonlinear constraints are increasing in the input and decreasing in the state, that is, applying larger state and input does not violate the constraints. However, our first question related to general bang-ride control is not considered in \cite{taghavian2023selector}, and many practical constraints are not decreasing in the state. Finally, \cite{drummond2023constrained} also studies bang-ride control and observes the importance of the monotonicity of the system in battery charging problems. %RDB: where? when?

\textit{Contribution:}
We focus on the three theoretical questions described above and demonstrate that PMP provides a unified framework for their analysis. To achieve bang-ride optimal control, while classical results show it is closely connected to nonlinear controllability \cite{benthack1997feedback,srinivasan2003dynamic}, we incorporate more general constraints and formalize the associated regularity condition on constraint switching, by exploiting classical regularity conditions in optimal control theory \cite{seierstad1986optimal}. In particular, the regularity condition requires an input discontinuity when a new state constraint becomes active, which is indeed observed in the application \cite{park2020optimal}, e.g., where the optimal current jumps when a state constraint is activated.  

For the second question, we establish a result analogous to the ones in \cite{taghavian2023selector, drummond2023constrained} but take a different approach via PMP. The result shows that the optimal control has the structure of the maximum feasible input under monotonicity of the system, objective, and constraints. While the theoretical insight is analogous to \cite{taghavian2023selector, drummond2023constrained}, exploiting PMP allows us to formalize regularity conditions, make an explicit connection to bang-ride control, and make further generalizations. 

In particular, the PMP-based approach provides a novel perspective: the structured input is optimal when the dynamic system, composed of the costate dynamics from PMP with the switching function as output, exhibits \textit{external positivity} \cite{drummond2023externally,weller2023external}, i.e., produces positive outputs. When specializing to monotone linear systems, we show that the structured optimal input and external positivity can be achieved by restricting the \textit{relative sensitivity} of the constraints, i.e., the sensitivity of a constraint to the state relative to its sensitivity to the input.

Combining the above results allows us to address the third question. We show the application of the theoretical results to a fast-charging battery problem, formulated for the single-particle battery model \cite{forman2010reduction} and a nonlinear voltage constraint.

\textit{Outline:} This article is structured as follows. Preliminaries and the problem are introduced in Section~\ref{sec:pre}. Bang-ride control is studied in Section~\ref{sec:Bang}. The last two questions are addressed in Section~\ref{sec:Mono}, and external positivity is studied in Section~\ref{sec:MonoGene}. The theory is applied to a battery charging problem in Section~\ref{sec:Apply}. All proofs are presented in the Appendix.

\textit{Notation:} For any matrix $M \in \mathbb{R}^{n\times m}$, $M \geqslant 0$ and $M >0$ denote element-wise inequality, i.e., $M$ only contains non-negative and positive elements, respectively. The symbols $1_{n\times m}$ and $0_{n \times m}$ denote $n\times m$ matrices with only ones and zeros, respectively. The notation $\mathrm{diag}(a_1,a_2,\dots,a_n)$ is a diagonal matrix with diagonal elements $a_i$. The optimal trajectory of the optimal control problem is denoted by $(x^\star,u^\star)$. For any function $s(x(t),u(t))$, we define the shorthand notation $s[t]:=s(x(t),u(t))$ and $s^\star[t]: = s(x^\star(t),u^\star(t))$. For brevity, the partial derivatives $\partial s /\partial x$ and $\partial s /\partial u$ are denoted as $s_x$ and $s_u$, respectively. A square real matrix is a \textit{Metzler matrix} if all its off-diagonal elements are non-negative.

\section{Preliminaries and Problem Formulation} \label{sec:pre}
\subsection{Nonlinear Systems}
 Following \cite{nijmeijer1990nonlinear}, consider the prototypical nonlinear system:
\begin{equation} \label{eq:sys}
\dot{x}(t) = F(x(t),u(t)) :=  f(x(t)) + g(x(t))u(t),
\end{equation}
where $f: \mathbb{R}^n \to \mathbb{R}^n$ and $g: \mathbb{R}^n \to \mathbb{R}^{n}$. Model~\eqref{eq:sys} can be used in various applications, e.g., mechanical systems \cite{nijmeijer1990nonlinear} and batteries \cite{liu2016extended}. Consider a piecewise continuous input $u: [0,t_f] \to  \mathbb{R}$ that is continuous at $0$ and $t_f$ and has a finite number of discontinuities. The value of $u$ at a point of discontinuity $t \in (0,t_f)$ is equal to its left-hand limit at $t$. We focus on a scalar input in this work, as this situation covers the application of interest. The generalization of the results to multiple inputs is out of the scope of this work.

The derivation of system-theoretical properties of \eqref{eq:sys}, such as nonlinear generalizations of controllability/observability, typically involves the operator known as the Lie bracket \cite{nijmeijer1990nonlinear}. Given two functions $f, \bar{f}: \mathbb{R}^n \to \mathbb{R}^n$ in class $C^\infty$, the Lie bracket is defined as 
$$
[f,\bar{f}](x):= \frac{\partial \bar{f}(x)}{\partial x }f(x) -  \frac{\partial f(x)}{\partial x }\bar{f}(x).
$$
For \eqref{eq:sys}, define $k=1,2,\dots$, \begin{align*}
 \mathrm{ad}^k_f g := [f, \mathrm{ad}^{k-1}_f g], \text{ and }  \mathrm{ad}_f^0 g: =g.
\end{align*}

\subsection{Optimal Control Theory}
Consider the optimal control problem: 
\begin{align}
 \min_{u,(t_f)} \quad & \phi(x(t_f))  + \int_{0}^{t_f} l(x(t)) dt, \label{eq:OC}\\
 \text{s.t. } & \dot{x}  = f(x) + g(x)u,\nonumber \\
 & s(x(t),u(t)) \leqslant 0,  \nonumber\\
 & h(x(t)) \leqslant 0,  \nonumber \\
 & z(x(t_f)) \geqslant 0, \text{ }x(0)=x_0, \nonumber
\end{align}
where the terminal time $t_f$ can be fixed or a free optimization variable, which will be specified when necessary. The functions $s: \mathbb{R}^n\times\mathbb{R} \to \mathbb{R}^{c_s}$ and $h:\mathbb{R}^n\to \mathbb{R}^{c_h}$ denote the mixed (state input) constraints and the (pure) state constraints, respectively. We assume that $s$ includes an input bound $u \in [u_{\mathrm{min}}, u_{\mathrm{max}}]$, and the remaining constraints are called the \textit{path constraints}. The functions $l, \phi: \mathbb{R}^n \to \mathbb{R} $ and $z: \mathbb{R}^n\to \mathbb{R}^{c_z}$ are the stage cost, the terminal cost, and the terminal constraint. Cost $l$ is independent of $u$ in this work.

PMP provides necessary conditions for optimal solutions of \eqref{eq:OC}. In PMP, regularity conditions on the time derivatives of state constraints are typically imposed \cite{hartl1995survey}. Consider the first-order time derivative\footnote{When $\dot{h} \equiv 0$, it is necessary to consider higher order time derivatives, see \cite[Thm.\ 6.1]{hartl1995survey}. To focus on the main result of this work, this technical complexity is avoided in this work.} of $h$, 
\begin{equation}
 \dot{h}(x,u) = \mathbf{J}_h(x) F(x,u),    
\end{equation}
where $\mathbf{J}_h $ is the Jacobian matrix of $h$. Combine $s$ and $\dot{h}$ by defining 
\begin{equation} \label{eq:GeneralConstraints}
\bar{s}_i = \begin{cases}
 s_i, \text{ }i =1,\dots,c_s,\\
 \dot{h}_{i-c_s}, \text{}i=c_s+1,\dots,c_s+c_h,
\end{cases}
\end{equation}
where each $\bar{s}_i$ depends on $u$. For brevity, $\bar{s}_i$ is said to be active for $i\in \{c_s+1,\dots, c_s+c_h\}$ if and only if (iff) the associated state constraint $h_{i-c_s}$ is active.

\begin{ass}[Standard regularity]  \label{ass:Regu}
\begin{enumerate}[label=(\alph*)]
    \item  The functions $f$, $g$, $\phi$, $s$, $l$, and $z$ are continuously differentiable, and each component of $h$ is in $C^2$.

    \item  For the optimal trajectory $(x^\star,u^\star)$ of \eqref{eq:OC}, the matrix
 $$
 \begin{bmatrix}
   \dot{h}^\star_u[t] & \mathrm{diag}(\dot{h}^\star[t]) & 0\\
s^\star_u[t] & 0 & \mathrm{diag}(s^\star[t])
 \end{bmatrix}
 $$
 has rank $c_h+ c_s$ almost every (a.e.) $t \in [0,t_f]$.
\end{enumerate}
 \end{ass}

Assumption~\ref{ass:Regu}b is the standard constraint qualification \cite{hartl1995survey}, analogous to constraint qualifications in optimization theory. It implies that at most one constraint is active at any time, as $u(t)$ is a scalar. A time instant $t$ is called an \textit{entry time} if some state constraint $h_i$ is inactive for a (non-zero) time interval before $t$ and becomes active for a time interval starting at $t$. 

The central objects of PMP are the Hamiltonian 
$$
H(x,u,\lambda_0,\lambda) =\lambda_0 l(x) + \lambda^\top (t) F(x,u),
$$
and the Lagrangian 
\begin{equation} \label{eq:Lagrange}
L(x,u,\lambda_0,\lambda, \mu,\nu) = H + \mu^\top(t)s+ \nu^\top(t) \dot{h} ,
\end{equation}
where $\lambda_0$ is a constant and $\lambda:[0,t_f] \to \mathbb{R}^n$, $\mu:[0,t_f] \to \mathbb{R}^{c_s}$, $\nu:[0,t_f] \to \mathbb{R}^{c_h}$ are multiplier functions.  Moreover, define the state-dependent control set \cite{hartl1995survey} 
\begin{align}
 \mathcal{D}(x) =\{u\mid & s(x,u) \leqslant 0, \nonumber \\ &\dot{h}_i(x,u) \leqslant 0 \text{ if } h_i(x)=0, i=1,\dots,c_h\}.   \label{eq:ControlSet}
\end{align}

Then, under Assumption~\ref{ass:Regu}, PMP \cite[Thm. 5.1]{hartl1995survey} states that, given an optimal solution $(x^\star, u^\star)$ of \eqref{eq:OC}, there exist $\lambda_0 \leqslant 0$, functions\footnote{The signs of $\lambda_0$, $\mu$, $\nu$, $\gamma$, and $\eta$ differ from \cite{hartl1995survey} as we consider minimizing the objective and reversed inequality constraints. $\lambda$ is piecewise absolutely continuous, and $\mu$ and $\nu$ are piecewise continuous.} $\lambda$, $\mu$, $\nu$, $\alpha \in \mathbb{R}^{c_z}$, $\gamma\in \mathbb{R}^{c_h}$, and a vector $\eta(\tau_i) \in \mathbb{R}^{c_h}$ for each time $\tau_i$ of discontinuity of $\lambda$ such that, for any $t \in [0,t_f]$,
\begin{equation} \label{eq:NonTrivial}
(\lambda_0,\lambda(t),\mu(t),\nu(t),\alpha,\gamma,\eta(\tau_1),\eta(\tau_2),\cdots) \not=  0,  
\end{equation}
and the following holds for a.e. $t\in [0,t_f]$: %RDB: should there be a word after "a.e."?
\begin{subequations}
\begin{align}
& H(x^\star(t),u^\star(t),\lambda_0,\lambda(t))  = \max_{   u \in \mathcal{D}(x^\star(t))   } H(x^\star(t),u,\lambda_0,\lambda(t)), \nonumber \\
&\partial L^\star/\partial u = 0,  \label{PMP:Lderive}\\
&\dot{\lambda}^\top =- \partial L^\star/\partial x , \label{PMP:costate}\\
&\mu(t) \leqslant 0, \text{ } \mu^\top(t) s^\star[t] = 0, \\
&\nu(t) \leqslant 0, \text{ }\nu^\top(t) h^\star[t] = 0, 
\end{align}
\end{subequations} 
and the transversality conditions at the terminal time $t_f$:
\begin{subequations}\label{PMP:costate-boundary-full}
\begin{align} 
& \lambda^\top(t_f^-) = \lambda_0 \phi^\star_x[t_f]+ \alpha^\top z^\star_x[t_f]+\gamma^\top h^\star_x[t_f], \label{PMP:costate-boundary} \\
& \alpha \geqslant 0, \text{ } \gamma \leqslant 0, \text{ } \alpha^\top z^\star[t_f] =\gamma^\top h^\star[t_f] =0. \label{PMP:CS} 
\end{align}
\end{subequations} 
At each entry time\footnote{Some literature also considers potential discontinuities at contact time \cite{hartl1995survey}, i.e., time $t$ when a state constraint is active but is inactive before and after $t$. As this is not often encountered in the application, we assume that there is no contact time for simplicity.} $\tau_i$, $\lambda_j(\tau_i)$, for some $j\in \{1,\dots,n\}$, may have a discontinuity satisfying
\begin{subequations}
\begin{align}
&\lambda^\top(\tau^-_i) = \lambda^\top(\tau^+_i)+ \eta^\top(\tau_i)h_x^\star[\tau_i], \label{eq:PMPCostateJump}\\
&\eta(\tau_i) \leqslant 0, \text{ } \eta^\top(\tau_i)h^\star[\tau_i]=0,
\end{align}
\end{subequations} 
where the right limit $\tau_i^+$ is replaced by $t_f$ when $\tau_i=t_f$, and the left limit $\tau_i^-$ is replaced by $0$ when $\tau_i = 0$.

If $t_f$ is free, it also holds that
\begin{equation} 
H^\star[t] = 0 \text{, } \forall t \in [0,t_f]. \label{PMP:ConstantH} 
\end{equation} 

\begin{rem}
    The above necessary conditions hold under another weak regularity condition so that $\lambda$ is well-behaved, i.e., it does not have infinite discontinuities. This is assumed to hold throughout this work. This technical detail is discussed in \cite{seierstad1986optimal,hartl1995survey}.
\end{rem}

The $\lambda$ is called the \textit{costate} due to the differential equation~\eqref{PMP:costate}. Note from \eqref{eq:PMPCostateJump} that activating state constraints may cause discontinuities of $\lambda$. The constant $\lambda_0$, if non-zero, can be normalized\footnote{ $\lambda_0$ is sometimes normalized to be $1$ when maximizing the objective is considered instead \cite{kirk2004optimal}.} to $-1$ \cite{liberzon2011calculus}, and it can be zero in some applications \cite{boscain2021introduction}, where the stage cost $l$ does not affect the optimal solution. The condition \eqref{eq:NonTrivial} is called the \textit{nontriviality condition} and will play an important role in our analysis.

\subsection{Optimization-Free Hybrid Simulation}
The goal of fast battery charging is to control the current to reach a desired state-of-charge (SOC) as fast as possible while respecting constraints, e.g., voltage and temperature constraints. The targeted SOC is typically formulated as a terminal constraint or terminal objective. The hybrid simulation approach aims to provide an approximate solution to this problem. Instead of its numerical implementation \cite{berliner2022novel}, we focus on its main logic in Algorithm~1.
 
\begin{algorithm}
\caption{Hybrid Simulation}\label{alg:example}
\begin{algorithmic}[1]
    \State Given $x(0)$, satisfying that  $(x(0),u_{\mathrm{max}})$ does not activate any path constraint, and initialize $u(0)=u_{\mathrm{max}}$
\For{$t \in [0,t_f]$}    
     \If{some $\bar{s}_i$ is active by $(x(t),u(t))$}
        \State Apply $u(t)$ satisfying $0= \bar{s}_i(x(t),u(t))$.
    \Else
        \State Apply $u(t) = u_{\mathrm{max}}$.
  \EndIf
\EndFor
 \end{algorithmic}
\end{algorithm}
 
The hybrid simulation is stopped if the target is reached, e.g., a certain SOC. In practice, the time horizon is discretized, and the active constraint in Step $4$ and the system model lead to a DAE that is simulated by a DAE solver \cite{berliner2022novel}. Note that we have made an implicit but practical assumption: At any $t$, at most one constraint is active so that the solution is feasible, consistent with Assumption~\ref{ass:Regu}b. Incorporating different sets of constraints into the optimal control problem can lead to different solutions to the hybrid simulation \cite{berliner2022novel}.

\subsection{Problem Formulation}
Several practical charging profiles are consistent with the solution of the hybrid simulation. Our goal is to investigate these charging profiles and the hybrid simulation from a theoretical perspective via the three questions in Section~\ref{sec:Intro}. These charging profiles are examples of bang-ride control, i.e., they either apply $u_{\mathrm{max}}$ or $u_{\mathrm{min}}$ (\textit{bang}), or keep a path constraint active (\textit{ride}). 

\begin{defn}
An optimal trajectory $(x^\star,u^\star)$ of problem \eqref{eq:OC} is said to be \textit{bang-ride} if,  for a.e. $t \in [0,t_f]$, there exists an active constraint.\footnote{We regard the bang-bang control, i.e., $u(t) \in \{u_{\mathrm{min}}, u_{\mathrm{max}}\}$, as a special case of bang-ride control.}  
\end{defn}

Moreover, they are special classes of bang-ride control that prefer the maximum feasible input. As shown in Example~\ref{exam:1}, it is not clear whether such control actions are optimal even in simple cases.

 \begin{exam} \label{exam:1}
Consider the optimal control problem
 \begin{align}
 \min_u  \quad & -x_1(t_f) \label{eq:SpecialOpti2}\\
 \text{s.t. } & \begin{bmatrix}
   \dot{x}_1\\
   \dot{x}_2
 \end{bmatrix}  = \begin{bmatrix}
     a_1 & 0\\
     0 & a_2
 \end{bmatrix}\begin{bmatrix}
   x_1\\
   x_2
 \end{bmatrix}+\begin{bmatrix}
     1\\
     1
 \end{bmatrix} u, \text{ } x(0)=\begin{bmatrix}
     0.5\\
     0.5
 \end{bmatrix}  \nonumber \\ & u \in [0,1], \text{ } \begin{bmatrix}
   1 & 1  
 \end{bmatrix} \begin{bmatrix}
   x_1\\
   x_2
 \end{bmatrix} u-4 \leqslant 0. \label{eq:SpecialOptiCons}
\end{align}
The system shows that given a fixed initial state, applying a larger input leads to a larger state trajectory. Such system is called a monotone system \cite{angeli2003monotone} and is also a positive system \cite{de2001stabilization}. Then, increasing the state also decreases the objective $-x_1(t_f)$. Therefore, a reasonable guess is that the optimal input applies $u=u_{\mathrm{max}}= 1$ and then rides the path constraint~\eqref{eq:SpecialOptiCons} if active, i.e., consistent with the solution of the hybrid simulation. This is in fact optimal when $a_1=0$ and $a_2=-1$, as shown in Fig.~\ref{fig:Special}.
 \begin{figure}[h]
    \centering
    \includegraphics[width=0.5\textwidth]{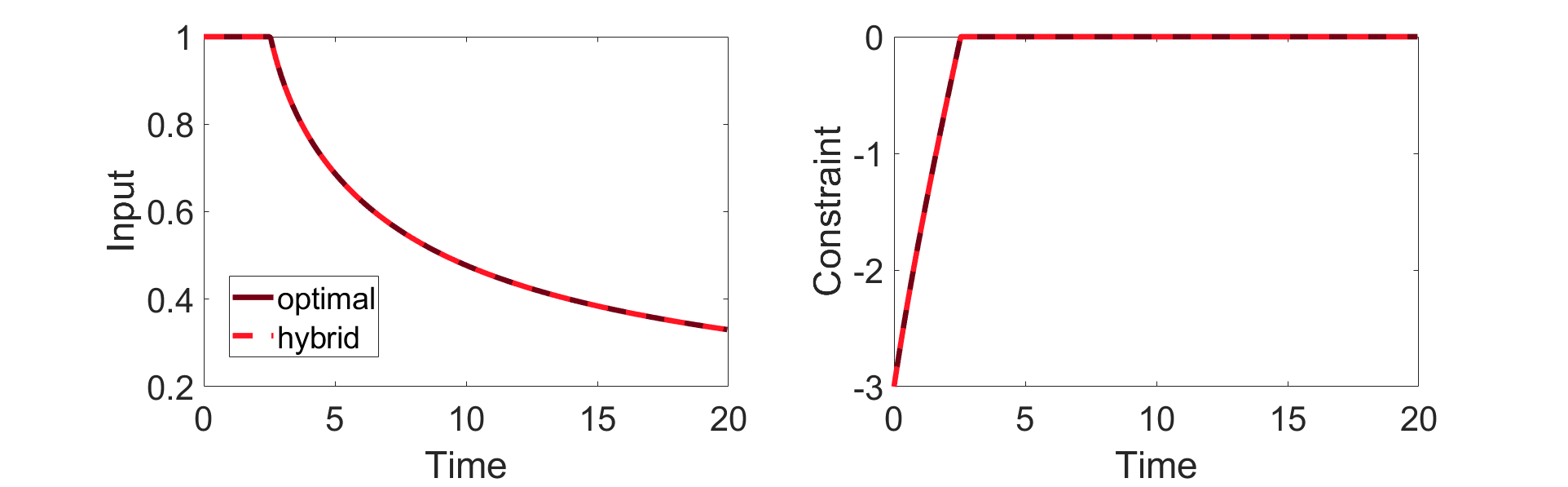}
    \caption{If $a_1=0$ and $a_2=-1$, the optimal input of \eqref{eq:SpecialOpti2} and the trajectory of the path constraint~\eqref{eq:SpecialOptiCons} are consistent with the hybrid simulation.}
    \label{fig:Special}
\end{figure}

 \begin{figure}[h]
    \centering
    \includegraphics[width=0.5\textwidth]{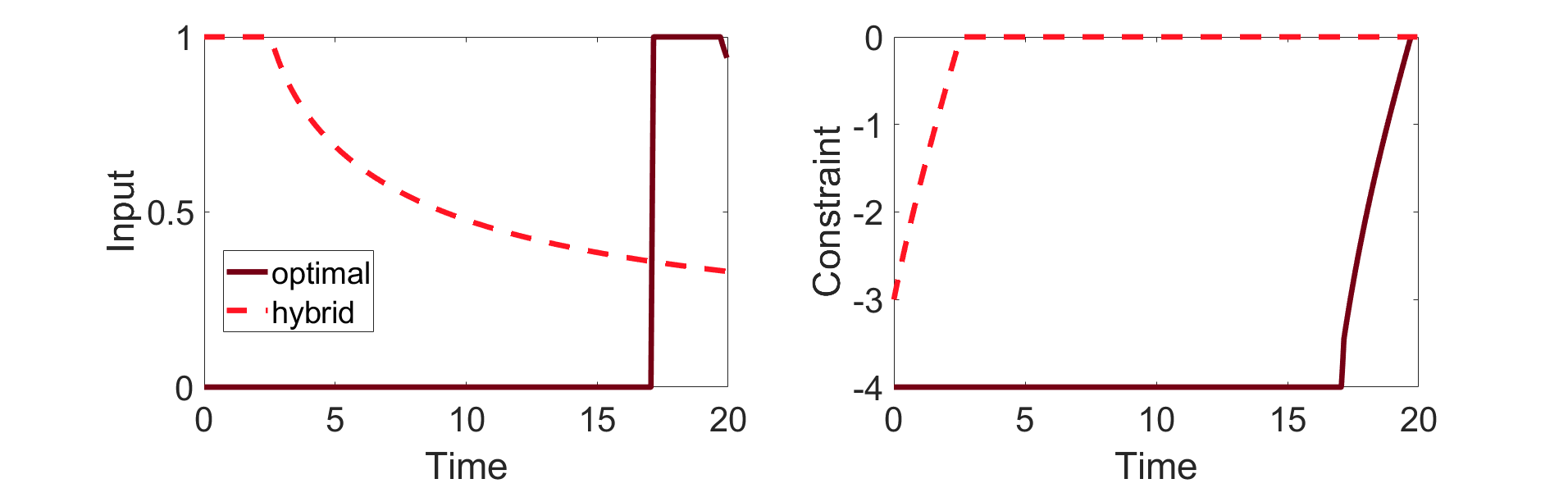}
    \caption{The optimal input of \eqref{eq:SpecialOpti2} with $a_1=-1$ and $a_2=0$ does not provide the maximum input for $t\in [0,15]$, even though the path constraint~\eqref{eq:SpecialOptiCons} is not active. The hybrid simulation is not optimal in this case. } 
    \label{fig:Special2}
\end{figure}
However, if $a_1=-1$ and $a_2=0$, where the system remains monotonic, the optimal input applies the minimum input instead for some time interval, as shown in Fig.~\ref{fig:Special2}. The hybrid simulation is not optimal and leads to the terminal objective $-0.34$, while the optimal control has the objective $-0.94$. \begin{figure}[h]
    \centering
    \includegraphics[width=0.5\textwidth]{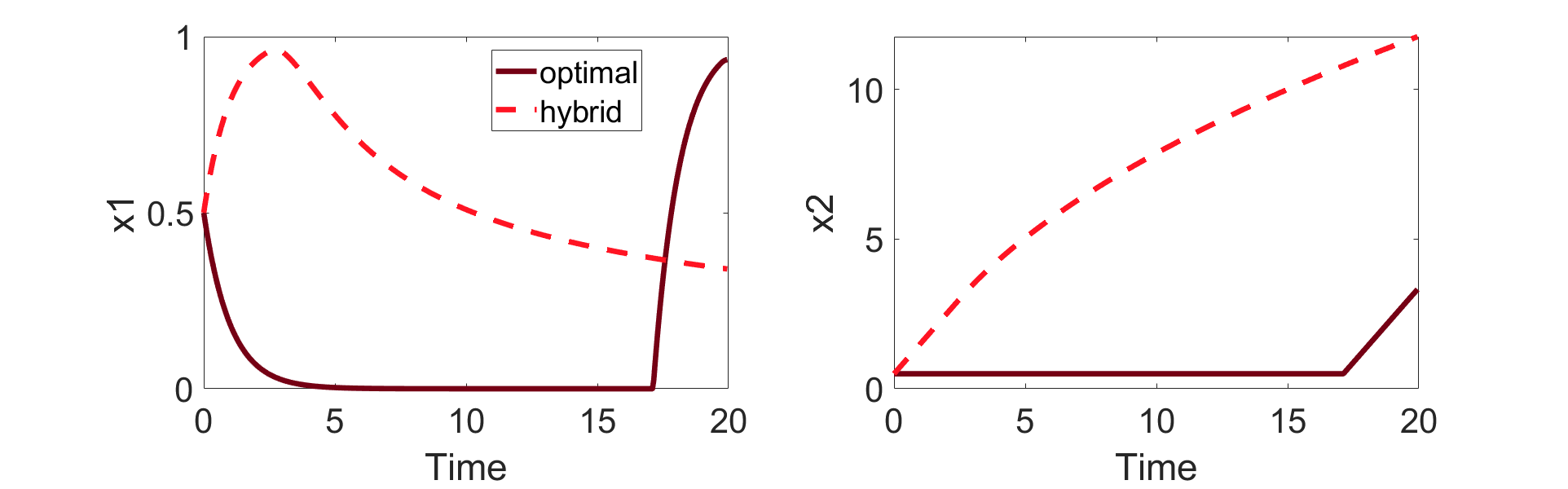}
    \caption{The state trajectories corresponding to Fig.~\ref{fig:Special2} are shown.} 
    \label{fig:Special2X}
\end{figure}
The associated state trajectories in Fig.~\ref{fig:Special2X} show that the hybrid simulation initially increases both states; however, the constraint soon becomes active and limits the input, decreasing $x_1$ and thus having a suboptimal objective. Instead, the optimal control increases the states later, avoiding early activation of the path constraint and leading to a larger $x_1$ at the terminal time. Therefore, this performance gap between the hybrid simulation and optimal control is due to the combined effect of system dynamics and the constraint. \hfill$\triangle$

%RDB: The optimal control objective for the two trajectories in Fig. 2 should be reported, to provide evidence that the two trajectories have different optimal control objectives. [Some optimal control problems have multiple solutions in which the optimal trajectory bundle consists of one trajectory (e.g., the dashed curve) time shifted to the right (the solid curve). This happens, for example, in some crystallization processes.]
% Shengling: That is a very good point! Indeed, I found that the costs of the two trajectories only have minor differences, may be due to numerical differences. I have changed the example.  
\end{exam}
\section{Bang-Ride Control} \label{sec:Bang}
\subsection{Introduction and System Properties}
We address the first question and establish the conditions for general bang-ride control. Analogous to the bang-bang control \cite{palanki1993synthesis}, its condition~\eqref{eq:Swiching} on the so-called \textit{switching function} $\lambda^\top g$ also achieves the bang-ride control, as shown in Lemma~\ref{lem:BangRide}.

\begin{lem} \label{lem:BangRide}
For problem~\eqref{eq:OC}, if Assumption~\ref{ass:Regu} holds and if the optimal trajectory satisfies \begin{equation} \label{eq:Swiching}
\lambda^\top(t) g^\star[t] \not= 0 \text{  for a.e. } t\in [0,t_f],
\end{equation} then the optimal trajectory of \eqref{eq:OC} is bang-ride.   
\end{lem}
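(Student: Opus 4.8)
The plan is to read the result straight off the Hamiltonian maximization condition of PMP, exploiting that the dynamics are control‑affine and the stage cost is control‑independent. Since Assumption~\ref{ass:Regu} holds, PMP applies and furnishes a costate $\lambda$ (together with the other multipliers) such that, for a.e.\ $t\in[0,t_f]$,
$$
H(x^\star(t),u^\star(t),\lambda_0,\lambda(t))=\max_{u\in\mathcal{D}(x^\star(t))}H(x^\star(t),u,\lambda_0,\lambda(t)).
$$
The structural observation is that $H=\lambda_0 l(x^\star)+\lambda^\top f(x^\star)+(\lambda^\top(t)g^\star[t])\,u$ is \emph{affine} in $u$, so this pointwise condition says exactly that $u^\star(t)$ maximizes the linear map $u\mapsto(\lambda^\top(t)g^\star[t])\,u$ over the control set $\mathcal{D}(x^\star(t))$ from \eqref{eq:ControlSet}.

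Next I would argue by contradiction. Suppose the trajectory is not bang‑ride, so the set $\mathcal{T}$ of times at which no component of $s$ or $h$ is active has positive Lebesgue measure; intersecting $\mathcal{T}$ with the full‑measure set on which the PMP relations and \eqref{eq:Swiching} hold and on which $u^\star$ is continuous leaves a set of positive measure, and I fix $t$ in it. Because $s$ includes the input bound, "no active constraint at $t$" means $s_i(x^\star(t),u^\star(t))<0$ for every $i$ (hence $u_{\mathrm{min}}<u^\star(t)<u_{\mathrm{max}}$) and $h_j(x^\star(t))<0$ for every $j$; the latter is precisely what removes all the $\dot h_j(x,u)\le0$ inequalities from the definition of $\mathcal{D}(x^\star(t))$ at this time. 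By continuity of $s$ in $u$ (Assumption~\ref{ass:Regu}a), there is $\varepsilon>0$ with $(u^\star(t)-\varepsilon,u^\star(t)+\varepsilon)\subseteq\mathcal{D}(x^\star(t))$.

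Feasibility of the perturbed controls $u^\star(t)\pm\varepsilon/2$, combined with optimality of $u^\star(t)$ for the affine objective, then gives $(\lambda^\top(t)g^\star[t])(\pm\varepsilon/2)\le0$, forcing $\lambda^\top(t)g^\star[t]=0$ and contradicting \eqref{eq:Swiching} at $t$. Hence $\mathcal{T}$ has measure zero, i.e.\ for a.e.\ $t$ some constraint is active, which is the bang‑ride property.

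I expect the one point needing care to be the bookkeeping in \eqref{eq:ControlSet}: the inequalities $\dot h_j(x,u)\le0$ are imposed only when $h_j(x)=0$, and it is this feature that guarantees an interior point of $\mathcal{D}(x^\star(t))$ whenever all state constraints are strictly inactive, so the elementary perturbation argument goes through without ever invoking the state‑constraint multipliers $\nu,\gamma,\eta$ or the costate jumps \eqref{eq:PMPCostateJump} at entry times. The remainder is routine: intersecting finitely/countably many full‑measure sets, and the fact that a nonconstant linear function has no interior maximizer on an open interval.
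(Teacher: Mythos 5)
Your proof is correct, but it reaches the conclusion through a different facet of PMP than the paper does. The paper's proof is a two-line argument from the Lagrangian stationarity condition \eqref{PMP:Lderive}: $\lambda^\top g^\star + \mu^\top s^\star_u + \nu^\top \dot{h}^\star_u = 0$, so if no constraint is active then complementary slackness forces $\mu(t)=0$ and $\nu(t)=0$, whence $\lambda^\top(t) g^\star[t]=0$, contradicting \eqref{eq:Swiching}. You instead invoke the pointwise Hamiltonian maximization over the state-dependent control set $\mathcal{D}(x^\star(t))$ of \eqref{eq:ControlSet}, observe that $H$ is affine in $u$ with slope $\lambda^\top(t) g^\star[t]$, and show that strict inactivity of every component of $s$ and $h$ (together with continuity of $s$ in $u$ and the fact that the $\dot{h}_j\leqslant 0$ inequalities are only imposed when $h_j=0$) makes $u^\star(t)$ an interior point of $\mathcal{D}(x^\star(t))$, which no nonconstant affine function can maximize. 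Both are contrapositive arguments of the form ``no active constraint $\Rightarrow$ switching function vanishes''; the paper's is shorter because it leans on the multipliers $\mu,\nu$ already supplied by PMP, while yours avoids the multiplier bookkeeping entirely at the cost of the (correctly handled) interiority verification. Your measure-theoretic framing (intersecting the full-measure sets where PMP, \eqref{eq:Swiching}, and continuity of $u^\star$ hold) is slightly more careful than the paper's ``at every $t$'' phrasing, though the continuity of $u^\star$ is not actually needed for the pointwise perturbation. No gap.
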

\begin{proof}
 The reasoning is classical and is presented for completeness. Eq.~\eqref{PMP:Lderive} shows that $\lambda^\top g^\star + \mu^\top(t)s^\star_u+ \nu^\top(t) \dot{h}^\star_u=0$. Then, at every $t$, $\lambda^\top(t) g^\star[t] \not=0$ leads to $\mu^\top(t)s^\star_u[t]+\nu^\top(t) \dot{h}^\star_u[t]\not=0$. There must be an active constraint at $t$; otherwise, $\mu(t) = 0$ and $\nu(t)=0$.
\end{proof}

Condition~\eqref{eq:Swiching} can be guaranteed by
\begin{itemize}
    \item relevant system properties associated with $g$, and 
        \item $\lambda(t) \not=0 $ for a.e. $t$.
\end{itemize}

The system property is related to (nonlinear local) controllability, as shown later. The non-zero $\lambda$ shows that the optimal control problem is nontrivial, as the system dynamics always affect the optimal solution via the Lagrangian~\eqref{eq:Lagrange}.

This subsection briefly recalls the system property to achieve bang-bang control \cite{palanki1993synthesis}, which is also useful for bang-ride control. The main difference in establishing bang-ride control from bang-bang control lies in guaranteeing non-zero $\lambda$, which requires addressing the additional path constraints. This is presented in the next subsection.

A necessary condition for $\lambda^\top(t) g(t)=0$ over a time interval is that its time derivatives vanish, i.e.,
\begin{equation} \label{eq:HighDeri}
(\lambda^\top(t) g(t) )^{(i)} = 0, \text{ } i \in \mathbb{N},
\end{equation} 
where we assume:
\begin{ass} \label{ass:Cinf}
 The functions $f$ and $g$ are $C^\infty$-functions.   
\end{ass}
If the system has a special property such that \eqref{eq:HighDeri} is not allowed, then \eqref{eq:Swiching} holds. It is well-known that the high-order time derivatives in \eqref{eq:HighDeri} can be computed by Lie brackets, which leads to the properties in Assumption~\ref{ass:Control}:
\begin{ass}[\cite{palanki1993synthesis}] \label{ass:Control}
The optimal trajectory satisfies 
for any $x\in \left\{x^\star(t)\mid t \in [0,t_f] \right\}$,
\begin{enumerate}[label=(\alph*)]
    \item 
 $
 [g,\mathrm{ad}^{v-1}_f g](x) \in  \mathrm{span}\{g(x),\mathrm{ad}_f g(x),\dots,\mathrm{ad}^{v-1}_f g(x)\},
 $ for $v = 1, 2, \cdots$.

 \item $M_i(x) :=\begin{bmatrix}     
g(x) &\mathrm{ad}_f g(x) &\cdots &\mathrm{ad}^{i-1}_f g(x) \end{bmatrix}$ has\footnote{A stronger condition may be considered by letting $i=n$ \cite{benthack1997feedback}.} rank $n$ for some $i\geqslant n$,
\end{enumerate}
\end{ass}

\begin{rem}[Nonlinear Controllability]
Assumption~\ref{ass:Control} is related to controllability. For linear systems, $M_n$ is the controllability matrix, and thus Assumption~\ref{ass:Control} implies linear controllability. For nonlinear systems, Assumption~\ref{ass:Control} implies \textit{locally strong accessibility} \cite{nijmeijer1990nonlinear}, i.e., one version of local controllability. It also implies that the system is \textit{feedback linearizable} \cite{nijmeijer1990nonlinear} and \textit{differentially flat} \cite{beaver2024optimal}. More detailed discussions are presented in Appendix~A.
\end{rem}

\begin{lem} \label{lem:Reuglar}
For problem~\eqref{eq:OC}, suppose that Assumptions~\ref{ass:Regu} and \ref{ass:Cinf} hold. Then, if $\lambda(t) \not =0$ for a.e. $t\in [0,t_f]$, and if Assumption~\ref{ass:Control} holds, the optimal trajectory $(x^\star,u^\star)$ of problem~\eqref{eq:OC} is bang-ride.
\end{lem}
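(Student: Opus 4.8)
The plan is to reduce the statement to Lemma~\ref{lem:BangRide}: since that lemma already yields the bang-ride property once the switching condition \eqref{eq:Swiching} is known, it suffices to prove that, under Assumptions~\ref{ass:Regu}, \ref{ass:Cinf}, \ref{ass:Control} and the hypothesis $\lambda(t)\neq 0$ a.e., one has $\lambda^\top(t)g^\star[t]\neq 0$ for a.e.\ $t\in[0,t_f]$. I would argue by contradiction. If \eqref{eq:Swiching} fails, then---using that $u^\star$ has finitely many discontinuities and that, by the standing regularity, the active-constraint set changes only finitely often---the interval $[0,t_f]$ splits into finitely many subintervals on each of which the active constraint is fixed, and on the ``free'' ones no constraint (the input bound included) is active. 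If the optimal trajectory were not bang-ride, the union of the free subintervals would have positive measure, hence some free subinterval $I=(t_1,t_2)$ has positive length. On $I$ the input is continuous and strictly interior, so $x^\star\in C^1(I)$; $\lambda$ is absolutely continuous on $I$ because no entry time lies in its interior; and $\mu\equiv\nu\equiv 0$ on $I$, so $L^\star=H^\star$ there and the $H$-maximization of PMP forces the interior first-order condition $\partial H^\star/\partial u=\lambda^\top g^\star=0$. Thus $\sigma(t):=\lambda^\top(t)g^\star[t]\equiv 0$ on $I$ and, as in \eqref{eq:HighDeri}, all its time derivatives vanish on $I$; the arc $I$ is singular.

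The core step is to evaluate these derivatives through Lie brackets, in the spirit of the bang-bang analysis. Using $\dot x^\star=f(x^\star)+g(x^\star)u$ together with the costate equation \eqref{PMP:costate}, which on $I$ reads $\dot\lambda^\top=-\partial H^\star/\partial x=-\lambda_0 l_x-\lambda^\top(f_x+u\,g_x)$, a short computation gives $\dot\sigma=\lambda^\top\mathrm{ad}_f g(x^\star)-\lambda_0\,l_x(x^\star)g(x^\star)$. Differentiating once more produces a term $u\,\lambda^\top[g,\mathrm{ad}_f g](x^\star)$ which, by Assumption~\ref{ass:Control}(a) with $v=2$, lies in $u\,\mathrm{span}\{\lambda^\top g,\lambda^\top\mathrm{ad}_f g\}$ and is hence a combination of $\sigma$ and $\dot\sigma$, both already zero on $I$. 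Iterating---at step $k$ invoking Assumption~\ref{ass:Control}(a) to cancel the cross term $u\,\lambda^\top[g,\mathrm{ad}^{k-1}_f g]$ against the lower-order derivatives, which vanish on $I$---one proves inductively that $\sigma^{(k)}=\lambda^\top\mathrm{ad}^k_f g(x^\star)+\lambda_0\vartheta_k$ on $I$ for all $k\ge 0$, where $\vartheta_k$ collects the running-cost contributions and is a function of $x^\star$, $u$ and finitely many derivatives of $u$. Since $\sigma\equiv 0$ on $I$, this yields $\lambda^\top(t)\,\mathrm{ad}^k_f g(x^\star(t))=-\lambda_0\vartheta_k(t)$ on $I$ for every $k$.

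Stacking $k=0,\dots,i-1$ gives $\lambda^\top(t)M_i(x^\star(t))=-\lambda_0\,[\vartheta_0(t)\ \cdots\ \vartheta_{i-1}(t)]$ on $I$, and by Assumption~\ref{ass:Control}(b) the matrix $M_i(x^\star(t))$ has full row rank $n$. Hence, when the running cost does not enter this identity---formally $\lambda_0\,l_x\equiv 0$, which covers the time-optimal and terminal-cost objectives typical of fast optimal control, so that $\vartheta_k\equiv 0$ and $\sigma^{(k)}=\lambda^\top\mathrm{ad}^k_f g(x^\star)$ exactly---the stacked identity becomes $\lambda^\top(t)M_i(x^\star(t))=0$, whence $\lambda(t)=0$ for every $t\in I$, contradicting $\lambda(t)\neq 0$ a.e.\ since $|I|>0$. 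Therefore no free subinterval of positive length can exist, \eqref{eq:Swiching} holds a.e., and Lemma~\ref{lem:BangRide} gives bang-ride. I expect the principal obstacles to be: (i) passing rigorously from a positive-measure non-bang-ride set to an honest subinterval $I$, which is exactly where the finite-switching regularity is needed; (ii) guaranteeing that $u$ is smooth enough along the singular arc $I$ to differentiate $\sigma$ to the required order, together with the careful Lie-bracket bookkeeping needed to apply Assumption~\ref{ass:Control}(a) at every step; and (iii) discharging the running-cost terms $\lambda_0\vartheta_k$ in full generality, for which the clean reasoning is in the regime $\lambda_0\,l_x\equiv 0$ relevant to the problems considered here.
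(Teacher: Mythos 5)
Your proposal is essentially the paper's own proof: the paper gives no standalone argument for Lemma~\ref{lem:Reuglar}, deferring to the classical singular-arc analysis of \cite{palanki1993synthesis} and \cite{benthack1997feedback}, which is exactly what you reconstruct --- reduce to Lemma~\ref{lem:BangRide}, extract a constraint-free interval, show the switching function $\sigma=\lambda^\top g^\star$ and all its time derivatives vanish there, use Assumption~\ref{ass:Control}(a) to absorb the $u$-dependent cross terms $u\,\lambda^\top[g,\mathrm{ad}_f^{k-1}g]$, and invoke the rank condition on $M_i$ to force $\lambda=0$. Your Lie-bracket bookkeeping is correct (one small imprecision: the $v=2$ cross term is a combination of $\lambda^\top g$ and $\lambda^\top\mathrm{ad}_f g$, and on the singular arc the latter equals $\lambda_0 l_x g^\star$ rather than $\dot\sigma=0$; your general inductive form with $\lambda_0\vartheta_k$ handles this correctly).

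The one substantive shortfall is the point you flag yourself: the running-cost terms. Your contradiction only closes when $\lambda_0 l_x\equiv 0$, i.e.\ in the Mayer or abnormal case, because otherwise the stacked identity reads $\lambda^\top(t)M_i(x^\star(t))=-\lambda_0[\vartheta_0\ \cdots\ \vartheta_{i-1}]$ with a generally nonzero right-hand side; full row rank of $M_i$ then \emph{determines} $\lambda(t)$ on the singular interval but does not force it to vanish, so no contradiction with $\lambda(t)\neq 0$ a.e.\ is obtained. Since Lemma~\ref{lem:Reuglar} is stated for general $l$ and is used in Theorem~\ref{thm:Main} under Assumptions~\ref{eq:Termi}(b)--(c), where $l\not\equiv 0$, this is a genuine gap relative to the stated generality (one the paper's citation-based proof does not visibly close either); to finish in the Bolza case you would need an additional argument ruling out a nonzero costate consistent with $\lambda^\top M_i=-\lambda_0\Theta$ along an optimal singular arc, e.g.\ by augmenting the state with $\dot x_0=l(x)$ and imposing the accessibility condition on the augmented system.
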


Lemma~\ref{lem:Reuglar} is a variant of \cite[Thm.\ 1]{palanki1993synthesis} and \cite[Thm.\ 6]{benthack1997feedback} following the same proof. The differences are that no constraints are considered and no state constraints are in \cite{palanki1993synthesis} and \cite{benthack1997feedback}, respectively. These simplified problems lead to $\lambda\not=0$ in a straightforward way. In contrast, due to state constraints in this work, $\lambda\not=0$ is explicitly assumed in Lemma~\ref{lem:Reuglar} and needs to be achieved through additional regularity conditions, as done in the next subsection.

\subsection{Qualification and Switching of Constraints} 
\label{sec:ConstraintQuali}
We investigate conditions for achieving a non-zero $\lambda$, as required in Lemma~\ref{lem:Reuglar}. We exploit a series of regularity conditions, including constraint qualifications and conditions on constraint switching. 
Constraint qualifications are typically rank conditions on the constraints. Regularity on constraint switching excludes potential discontinuities of $\lambda$ caused by the activation/deactivation of state constraints. Some regularity conditions in this subsection are classical, but have not been used to study the bang-ride property.

For the constraint qualification, the settings in Assumption~\ref{eq:Termi} generalize the classical setting in \cite{palanki1993synthesis}.
\begin{ass}[Terminal objective and constraint]\label{eq:Termi}
 One of the following conditions holds:
   \begin{enumerate}[label=(\alph*)]
     \item $t_f$ can be free or fixed, $l \equiv 0$, and 
    \begin{equation}      \label{eq:Arank2}
    \mathrm{rank}\!\left( \begin{bmatrix}
\partial \phi/\partial x & 0 \\
      \partial z/  \partial x  & \mathrm{diag}(z) 
    \end{bmatrix}   \Bigg\rvert_{x^\star(t_f)}\right)\! = c_{z}+1.
    \end{equation}
    
   \item  $t_f$ is free, $l(x^\star(t))\not=0$ for any $t \in [0,t_f]$, and
    \begin{equation}      \label{eq:Arank}
\mathrm{rank}\!\left(\begin{bmatrix}
      \partial z  /\partial x   & \mathrm{diag}(z) 
    \end{bmatrix}     \big\rvert_{x^\star(t_f)}\right)\! = c_{z}.
    \end{equation}

    \item $t_f$ is fixed, $l_x^\star[t] \not= 0$ for a.e. $t \in [0,t_f]$, and \eqref{eq:Arank} holds.
\end{enumerate}
\end{ass}

 When $t_f$ is free, a common choice is $l \equiv 1$, leading to the time-optimal control problem. Assumption~\ref{eq:Termi}b generalizes from $l \equiv 1$ to a non-zero objective function. Assumption~\ref{eq:Termi}a and c extend \cite{palanki1993synthesis} to incorporate the terminal constraint and a non-zero objective, respectively.\footnote{Equation \eqref{eq:Arank} is satisfied vacuously if $z$ is absent.}

When there is no path constraint, Assumption~\ref{eq:Termi} achieves non-zero $\lambda$, analogous to the classical result \cite{liberzon2011calculus}. In this work, potential discontinuities of $\lambda$ caused by state constraints need to be addressed. We exploit classical regularity conditions in \cite[Chap.\  5.4]{seierstad1986optimal} for our purposes. First, define the index set of active state constraints right after and right before time $t$:
\begin{align*}
\mathcal{G} ^\star(t^+ ) := \{ i\mid  &\exists \text{ }k>t, \text{arbitrarily near }t,  \text{such that} \\& h_i(x^\star(k))=0, i =1,\dots,c_h\}.\\
\mathcal{G}^\star(t^-) := \{ i\mid & \exists \text{ }k<t, \text{arbitrarily near }t, \text{such that} \\&h_i(x^\star(k))=0, i =1,\dots,c_h\}.
\end{align*}  
Further define $\mathcal{G}^\star(t) := \{i\mid h_i(x^\star(t)) =0,i=1,\dots,c_h\}$. Then, if a state constraint becomes active at $t$, $\mathcal{G}(t^-) \not= \mathcal{G}(t)$.

\begin{ass}[Switching of state constraints]  \label{ass:ContBoundary}
The following properties hold for the optimal trajectory\footnote{These properties hold vacuously if there is no pure state constraint.}:
\begin{enumerate}[label=(\alph*)]
    \item There is no switching of state constraints at time $0$ and $t_f$, i.e., $\mathcal{G}^\star(0) = \mathcal{G}^\star(0^+)$ and $\mathcal{G}^\star(t_f) = \mathcal{G}^\star(t_f^-)$.

    \item At each entry time $\tau_i \in (0,t_f)$ defined in \eqref{eq:PMPCostateJump}, $u^\star$ is discontinuous at $\tau_i$.
\end{enumerate}
\end{ass}

Assumption~\ref{ass:ContBoundary} ensures the continuity of $\lambda(t)$ on $[0,t_f]$ despite the switching of state constraints. In Assumption~\ref{ass:ContBoundary}a, the absence of constraint switching at time $0$ and $t_f$ is common in practical situations. Assumption~\ref{ass:ContBoundary}b regulates the constraint switching for $t \in (0,t_f)$. It means that, when a new state constraint becomes active, the input has a jump. Equivalently, Assumption~\ref{ass:ContBoundary}b requires a jump of the time derivative $\dot{h}^\star(\tau_i)$ of the state constraint, due to the jump of $u^\star$ and $\dot{h}^\star(\tau_i) = h_x(x^\star(\tau_i)) F(x^\star(\tau_i),u^\star(\tau_i))$. This is common in real-world applications. For example, a state (e.g., temperature) tends to increase with a positive time derivative, and then hits its upper bound and remains at this value, i.e., its time derivative jumps from positive to zero. This also leads to a jump in $u$ to keep this state at its upper bound.

Based on Assumption~\ref{ass:ContBoundary} and the nontriviality condition \eqref{eq:NonTrivial}, we show $\lambda\not=0$ for problem \eqref{eq:OC}.
 
\begin{prop}\label{prop:NonZeroLambda}
For  problem~\eqref{eq:OC} and its optimal trajectory, if Assumptions~\ref{ass:Regu}, \ref{eq:Termi}, and \ref{ass:ContBoundary} hold, $\lambda(t)\not=0$ for a.e. $t \in [0,t_f]$. Moreover, $\lambda(t) \not=0$ for any $t \in [0,t_f]$ under Assumptions~\ref{eq:Termi}ab.
\end{prop}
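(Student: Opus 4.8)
The plan is to argue by contradiction: suppose $\lambda$ vanishes on a subset $E\subseteq[0,t_f]$ of positive measure (or, for the stronger ``for any $t$'' claim under Assumptions~\ref{eq:Termi}ab, at a single point $t_0$), and deduce that every multiplier in the tuple \eqref{eq:NonTrivial} is zero, contradicting the nontriviality condition. The first and most delicate step is to show that Assumption~\ref{ass:ContBoundary} makes $\lambda$ continuous on all of $[0,t_f]$. By \ref{ass:ContBoundary}a there is no state-constraint switching at $0$ or $t_f$, so the only possible jumps of $\lambda$ are the interior ones in \eqref{eq:PMPCostateJump}, $\lambda^\top(\tau_i^-)=\lambda^\top(\tau_i^+)+\eta_j(\tau_i)h_{j,x}^\star[\tau_i]$, at an entry time $\tau_i$ of a single state constraint $h_j$ (single by Assumption~\ref{ass:Regu}b). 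Passing to left/right limits in the stationarity condition \eqref{PMP:Lderive}, using continuity of the Hamiltonian along the optimal trajectory at the junction, and using that $\dot{h}_j^\star\equiv 0$ on the boundary arc just after $\tau_i$, one is led to $\bigl(u^\star(\tau_i^-)-u^\star(\tau_i^+)\bigr)\bigl(\lambda^\top(\tau_i^+)g^\star[\tau_i]+\eta_j(\tau_i)h_{j,x}^\star[\tau_i]g^\star[\tau_i]\bigr)=0$; since \ref{ass:ContBoundary}b forces $u^\star(\tau_i^-)\neq u^\star(\tau_i^+)$, this reduces to the classical junction identity of \cite[Ch.~5.4]{seierstad1986optimal} that yields $\eta_j(\tau_i)=0$, hence continuity of $\lambda$ across every entry time. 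I expect \emph{this continuity step to be the main obstacle}, as it rests on the precise first-order-state-constraint junction analysis (continuity of $H^\star$ at junctions, the non-degeneracy $h_{j,x}^\star g^\star\neq 0$ inherited from \ref{ass:Regu}b, and the indirect-adjoining form of the jump conditions) rather than the elementary manipulations used in the rest of the argument.

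The second step is a linear-ODE representation of the costate on each arc. On any maximal subinterval on which a fixed constraint $\bar{s}_k$ is active — or none is — Assumption~\ref{ass:Regu}b guarantees $(\bar{s}_k)_u^\star\neq 0$, so \eqref{PMP:Lderive} together with complementary slackness lets me solve for the (unique) active multiplier as a linear function of $\lambda^\top g^\star$; substituting it into the costate equation \eqref{PMP:costate} gives $\dot\lambda^\top=\lambda^\top A(t)+\lambda_0 b(t)$ on that arc, with $A,b$ bounded measurable and $b\equiv 0$ whenever $l\equiv 0$. In particular this ODE is \emph{homogeneous} in $\lambda$ as soon as $\lambda_0=0$, so a zero of $\lambda$ at one interior point of an arc propagates to $\lambda\equiv 0$ on that whole arc, and then, by the continuity from step one, to $\lambda\equiv 0$ on all of $[0,t_f]$.

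The third step combines these with Assumption~\ref{eq:Termi} to force $\lambda_0=0$ and then conclude. If \ref{eq:Termi}a holds ($l\equiv 0$) the arc ODE is already homogeneous, so a single zero of $\lambda$ gives $\lambda\equiv 0$. If \ref{eq:Termi}b holds ($t_f$ free, $l(x^\star(\cdot))\neq 0$), then $H^\star\equiv 0$ by \eqref{PMP:ConstantH}, so $\lambda(t_0)=0$ immediately forces $\lambda_0 l(x^\star(t_0))=0$ and hence $\lambda_0=0$, and again $\lambda\equiv 0$ on $[0,t_f]$. If \ref{eq:Termi}c holds ($t_f$ fixed, $l_x^\star\neq 0$ a.e.), then $\lambda\equiv 0$ on a positive-measure set $E$ gives $\dot\lambda=0$ a.e.\ on $E$; feeding this into \eqref{PMP:costate}, \eqref{PMP:Lderive} and $(\bar{s}_k)_u^\star\neq 0$ forces $\mu=\nu=0$ and then $\lambda_0 l_x^\star=0$ a.e.\ on $E$, hence $\lambda_0=0$, after which the homogeneous arc ODE again yields $\lambda\equiv 0$ on $[0,t_f]$. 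With $\lambda\equiv 0$ and $\lambda_0=0$ on $[0,t_f]$: the per-arc formulas give $\mu\equiv 0$ and $\nu\equiv 0$; step one gives every $\eta(\tau_i)=0$; and the transversality condition \eqref{PMP:costate-boundary} collapses to $\alpha^\top z_x^\star[t_f]+\gamma^\top h_x^\star[t_f]=0$ with $\alpha^\top z^\star[t_f]=\gamma^\top h^\star[t_f]=0$, which with the rank condition \eqref{eq:Arank2} (resp.\ \eqref{eq:Arank}) forces $\alpha=0$ and $\gamma=0$ — using \ref{ass:ContBoundary}a to dispose of the case of a state constraint active at $t_f$, since it is then active on a left-neighborhood of $t_f$ where $\lambda\equiv 0$ has already been shown. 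Thus the entire tuple in \eqref{eq:NonTrivial} vanishes, a contradiction. Finally, the reason cases \ref{eq:Termi}a and \ref{eq:Termi}b give $\lambda(t)\neq 0$ for \emph{every} $t$, while \ref{eq:Termi}c only gives it a.e., is that in (a) and (b) a single zero of $\lambda$ already triggers the whole chain, whereas the argument under (c) needs $E$ of positive measure to extract $\dot\lambda=0$ and hence $\lambda_0=0$, and an isolated zero of $\lambda$ cannot be excluded.
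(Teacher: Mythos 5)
Your overall strategy coincides with the paper's: establish continuity of $\lambda$ on $[0,t_f]$ from Assumption~\ref{ass:ContBoundary} (hence $\eta(\tau_i)=0$ and $\gamma=0$), eliminate $\mu,\nu$ from \eqref{PMP:Lderive} using the full row rank of the constraint block to turn \eqref{PMP:costate} into a linear costate ODE that is homogeneous once $\lambda_0=0$ or $l\equiv 0$, and then run the three cases of Assumption~\ref{eq:Termi} to collapse the entire tuple in \eqref{eq:NonTrivial} and contradict nontriviality; your explanation of why cases (a),(b) give ``for every $t$'' while (c) only gives ``a.e.'' also matches. Two cautions on the points where you deviate. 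First, your junction computation is correct as far as it goes, but it does not prove $\eta_j(\tau_i)=0$: combining continuity of $H^\star$ with $\dot h_j^\star(x,u^+)=0$ and $u^-\neq u^+$ yields $\lambda^\top(\tau_i^-)g^\star[\tau_i]=0$, which together with stationarity on the boundary arc only gives $\eta_j(\tau_i)=\nu_j(\tau_i^+)$, not $\eta_j(\tau_i)=0$; the statement you actually need (control discontinuity at a first-order entry point implies costate continuity) is the classical junction theorem, and the paper simply cites it (\cite[Thm.~4.2]{hartl1995survey}, \cite[Chap.~5.4]{seierstad1986optimal}) rather than re-deriving it, so you should do the same rather than suggest your identity delivers it. Second, the rank conditions \eqref{eq:Arank2}/\eqref{eq:Arank} involve only $\phi_x$ and $z_x$, so they cannot force $\gamma=0$ when a state constraint is active at $t_f$; in the paper $\gamma=0$ is obtained separately from Assumption~\ref{ass:ContBoundary}a, because the $h_x^\star$ term in \eqref{PMP:costate-boundary} arises only from constraint switching at $t_f$ (the argument of \cite[Appendix~B]{bryson1963optimal}), and only after that does the rank condition kill $(\lambda_0,\alpha)$. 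With these two steps repaired by the appropriate citations, your argument is the paper's argument.
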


The proof of Proposition~\ref{prop:NonZeroLambda} is presented in the Appendix. In Section~\ref{sec:MonoGene}, we mainly use Assumptions~\ref{eq:Termi}a for $l \equiv 0$ and non-zero $\lambda$ for \textit{any} $t$.

\subsection{Main Result for Bang-Ride Control}
Building upon the previous results on non-zero $\lambda$ in Proposition~\ref{prop:NonZeroLambda} and the system properties in Lemma~\ref{lem:Reuglar}, the main theoretical result is obtained straightforwardly:
\begin{thm} \label{thm:Main}
If problem~\eqref{eq:OC} and its optimal trajectory satisfy Assumptions~\ref{ass:Regu}, \ref{ass:Cinf}, \ref{ass:Control}, \ref{eq:Termi}, and \ref{ass:ContBoundary}, the optimal trajectory $(x^\star,u^\star)$ is bang-ride.
 \end{thm}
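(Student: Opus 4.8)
The proof of Theorem~\ref{thm:Main} is essentially a matter of assembling the pieces already developed in Section~\ref{sec:Bang}, so the plan is to chain the two preceding results together and verify that their hypotheses are exactly those collected in the statement. First I would invoke Proposition~\ref{prop:NonZeroLambda}: under Assumptions~\ref{ass:Regu}, \ref{eq:Termi}, and \ref{ass:ContBoundary}, the costate satisfies $\lambda(t) \neq 0$ for a.e.\ $t \in [0,t_f]$. This is the substantive input — it rules out the degenerate multiplier configuration that the nontriviality condition~\eqref{eq:NonTrivial} alone does not preclude once pure state constraints (and their associated costate jumps~\eqref{eq:PMPCostateJump}) are present.

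Second, I would feed this conclusion into Lemma~\ref{lem:Reuglar}. That lemma requires Assumptions~\ref{ass:Regu} and \ref{ass:Cinf}, the non-vanishing of $\lambda$ a.e. (just obtained), and Assumption~\ref{ass:Control} on the Lie-bracket rank/involutivity structure of $(f,g)$ along the optimal trajectory. All of these are among the hypotheses of Theorem~\ref{thm:Main}, so Lemma~\ref{lem:Reuglar} applies verbatim and yields that $(x^\star,u^\star)$ is bang-ride. Concretely, the mechanism inside Lemma~\ref{lem:Reuglar} is that Assumption~\ref{ass:Control} forbids $\lambda^\top(t) g^\star[t]$ from vanishing on any time interval unless $\lambda$ itself vanishes there — the high-order derivatives~\eqref{eq:HighDeri} expressed through $\mathrm{ad}_f^k g$ together with the rank-$n$ condition on $M_i$ force $\lambda \equiv 0$ on such an interval, contradicting Proposition~\ref{prop:NonZeroLambda} — hence the switching condition~\eqref{eq:Swiching} holds a.e., and Lemma~\ref{lem:BangRide} closes the argument via the stationarity condition~\eqref{PMP:Lderive}.

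The only genuine care-point, and the step I would be most careful about, is bookkeeping of the regularity hypotheses rather than any new estimate: one must check that Assumption~\ref{eq:Termi} (in whichever of its three mutually exclusive forms~(a), (b), (c) is in force) is consistent with the standing assumptions of Lemma~\ref{lem:Reuglar} — in particular that the choice of $l$ (identically zero in case~(a), nonzero along the trajectory in cases~(b), (c)) and of the status of $t_f$ (free or fixed) does not interfere with the Lie-bracket argument, which is purely about the drift and input vector fields and is indifferent to the cost. It does not, since Assumption~\ref{ass:Control} constrains only $f$ and $g$. I would also note explicitly that Assumption~\ref{ass:ContBoundary} is what guarantees continuity of $\lambda$ across entry times so that ``$\lambda(t)\neq 0$ a.e.'' is not vacuously satisfied by a $\lambda$ that jumps to and from zero; this is handled inside Proposition~\ref{prop:NonZeroLambda} and needs only to be cited.

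In short, the proof is: apply Proposition~\ref{prop:NonZeroLambda} to get $\lambda \neq 0$ a.e.; then apply Lemma~\ref{lem:Reuglar} with that fact plus Assumptions~\ref{ass:Regu}, \ref{ass:Cinf}, \ref{ass:Control}; conclude bang-ride. There is no hard analytical obstacle — the real work was already discharged in Proposition~\ref{prop:NonZeroLambda} (the costate-nonvanishing argument under state constraints) and in the classical Lie-bracket machinery imported as Lemma~\ref{lem:Reuglar}; Theorem~\ref{thm:Main} is the corollary that packages them.
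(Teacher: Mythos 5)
Your proposal is correct and follows exactly the paper's own route: the paper derives Theorem~\ref{thm:Main} precisely by combining Proposition~\ref{prop:NonZeroLambda} (which yields $\lambda(t)\neq 0$ a.e.\ under Assumptions~\ref{ass:Regu}, \ref{eq:Termi}, and \ref{ass:ContBoundary}) with Lemma~\ref{lem:Reuglar} (which converts that fact, together with Assumptions~\ref{ass:Cinf} and \ref{ass:Control}, into the bang-ride property). Your additional remarks on the Lie-bracket mechanism and the hypothesis bookkeeping are accurate but not needed beyond what the paper itself states.
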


Theorem~\ref{thm:Main} shows that combining the system property, related to controllability, and the regularity of constraint switching leads to bang-ride control. These conditions are sufficient. As shown in Sections~\ref{sec:Mono} and \ref{sec:MonoGene}, bang-ride control can also be achieved by monotone systems that are not controllable. However, the regularity of constraint switching remains important.

\subsection{System Decomposition} \label{sec:Subsystem}
Even if the complete system may not satisfy Assumption~\ref{ass:Control}, an important situation, pointed out in \cite{benthack1997feedback}, is when only a subpart of the system affects the optimal control solution and satisfies the assumption. One straightforward situation is when the system is in the form: 
\begin{subequations} \label{eq:SysSub}
\begin{align}
  \dot{x}_1& = f_1(x_1)+ g_1(x_1)u, \label{eq:sub1} \\
 \dot{x}_2 &= f_2(x_1,x_2)+ g_2(x_1,x_2)u, \label{eq:sub2}
\end{align}
\end{subequations}
and all the functions in problem~\eqref{eq:OC} depend only on $x_1$. Then, in addition to the regularity conditions in the constraints, the system properties in Assumption~\ref{ass:Control} only need to be satisfied by \eqref{eq:sub1} to achieve the bang-ride property.

A more interesting situation occurs when the system is not in the form of \eqref{eq:SysSub} but can be transformed to this form via a coordinate transformation. We illustrate this considering linear systems. Suppose that problem~\eqref{eq:OC} has the form:
\begin{align}
 \min_{u,(t_f)} \quad & \phi(y(t_f))  + \int_{0}^{t_f} l(y(t)) dt, \label{eq:OCdem}\\
 \text{s.t. } & \dot{x} = Ax+Bu, \text{ }y=Cx, \label{eq:LinearSys}\\
 & s(y(t),u(t)) \leqslant 0, \text{ }h(y(t)) \leqslant 0,  \nonumber \\
 &z(y(t_f)) \geqslant 0, \text{ }x(0)=x_0, \nonumber
\end{align}
where $C \in \mathbb{R}^{m \times n}$, e.g., $Cx$ is a sub-vector of $x$. The linear system~\eqref{eq:LinearSys} has an associated Kalman decomposition:
\begin{equation} \label{eq:KalDem}
\begin{bmatrix}
\dot{\bar{x}}_1 \\
 \dot{\bar{x}}_2
\end{bmatrix}= \begin{bmatrix}
 \bar{A}_{11} & 0 \\
   \bar{A}_{12} &  \bar{A}_{22}
\end{bmatrix}\begin{bmatrix}
\bar{x}_1 \\
\bar{x}_2
\end{bmatrix}+ \begin{bmatrix}
  \bar{B}_{1}\\
  \bar{B}_{2}
\end{bmatrix} u, \text{ } y = \bar{C} x_1.
\end{equation} 
As only $y$ affects the optimal solution,  solving problem~\eqref{eq:OCdem} using system model~\eqref{eq:KalDem} leads to the same optimal solution. Therefore, to ensure the bang-ride property, only the subsystem $\dot{ \bar{x}}_1 =  \bar{A}_{11}  \bar{x}_1 + \bar{B}_1 u$ needs to satisfy Assumption~\ref{ass:Control}, i.e., being controllable. 

The difficulty in applying the above reasoning to nonlinear systems is in finding a global coordinate transformation. Transformation by exploiting the observability codistribution is only defined locally \cite{nijmeijer1990nonlinear}.

\section{Monotonicity For Maximum Input} \label{sec:Mono}
\subsection{Introduction}
The previous theory covers the general bang-ride property; however, there are more structural properties in common battery charging profiles and the solution of the hybrid simulation:
\begin{itemize}
    \item  While a bang-ride controller may switch between the minimum and maximum input, common battery charging profiles and the hybrid simulation apply the maximum input.
 \end{itemize}

The above property relates to the following straightforward consequence of PMP:
\begin{lem}
Given problem~\eqref{eq:OC} and its optimal trajectory $(x^\star,u^\star)$, suppose that Assumption~\ref{ass:Regu} holds. If 
\begin{equation} \label{eq:No Swiching}
\lambda^\top(t) g^\star[t] > 0 \text{  for a.e. } t\in [0,t_f],
\end{equation}
then \begin{equation}    \label{eq:maxControl}
u^\star(t) = \max  \mathcal{D}(x^\star(t)), \text{ for a.e. } t\in [0,t_f],
\end{equation} with the set-valued function $ \mathcal{D}$ defined in \eqref{eq:ControlSet}.
\end{lem}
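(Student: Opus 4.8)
The plan is to read off the conclusion directly from the Hamiltonian maximization condition of PMP, using the fact that the system \eqref{eq:sys} is affine in the input and the stage cost $l$ is input-independent. First I would write the Hamiltonian along the optimal trajectory as
$$
H(x^\star(t),u,\lambda_0,\lambda(t)) = \lambda_0 l(x^\star(t)) + \lambda^\top(t) f(x^\star(t)) + \bigl(\lambda^\top(t) g^\star[t]\bigr)\, u ,
$$
which is an affine function of the scalar $u$ whose slope is exactly the switching function $\lambda^\top(t) g^\star[t]$. By PMP (specifically the Hamiltonian maximization condition stated just before \eqref{PMP:Lderive}), for a.e.\ $t \in [0,t_f]$ the optimal input satisfies $H(x^\star(t),u^\star(t),\lambda_0,\lambda(t)) = \max_{u\in\mathcal{D}(x^\star(t))} H(x^\star(t),u,\lambda_0,\lambda(t))$.

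Next I would argue that the maximizer over $\mathcal{D}(x^\star(t))$ is well defined and equals $\max\mathcal{D}(x^\star(t))$ on the set of full measure where $\lambda^\top(t)g^\star[t]>0$. The set $\mathcal{D}(x^\star(t))$ defined in \eqref{eq:ControlSet} is nonempty, since $u^\star(t)\in\mathcal{D}(x^\star(t))$ by feasibility of the optimal trajectory (when $h_i(x^\star(t))=0$, the constraint $h_i\le 0$ forces $\dot{h}_i(x^\star(t),u^\star(t))\le 0$), and it is compact: it is closed because $s$ and $\dot h$ are continuous in $u$ by Assumption~\ref{ass:Regu}a, and it is bounded because $s$ contains the input bound $u\in[u_{\mathrm{min}},u_{\mathrm{max}}]$. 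Hence $\max\mathcal{D}(x^\star(t))$ exists. Since, on this full-measure set, the slope $\lambda^\top(t)g^\star[t]$ is strictly positive, $u\mapsto H(x^\star(t),u,\lambda_0,\lambda(t))$ is strictly increasing, so its maximum over the compact set $\mathcal{D}(x^\star(t))$ is attained uniquely at the right endpoint $\max\mathcal{D}(x^\star(t))$.

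Combining the two steps: the PMP maximization holds outside a null set, the hypothesis \eqref{eq:No Swiching} holds outside a null set, so outside the union of these two null sets we have $H(x^\star(t),u^\star(t),\cdot) = \max_{u\in\mathcal{D}(x^\star(t))}H(x^\star(t),u,\cdot) = H(x^\star(t),\max\mathcal{D}(x^\star(t)),\cdot)$, and by strict monotonicity in $u$ this forces $u^\star(t)=\max\mathcal{D}(x^\star(t))$ for a.e.\ $t\in[0,t_f]$, which is \eqref{eq:maxControl}. There is essentially no deep obstacle here; the only points requiring care are the measure-theoretic bookkeeping (taking the union of the two exceptional null sets) and verifying compactness/nonemptiness of $\mathcal{D}(x^\star(t))$ so that "$\max\mathcal{D}$" is a legitimate object and the strictly increasing affine function indeed attains its maximum at the upper endpoint.
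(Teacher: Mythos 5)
Your proof is correct and follows exactly the route the paper intends: the paper states this lemma as a ``straightforward consequence of PMP'' without writing out a proof, and the argument it has in mind is precisely yours --- the Hamiltonian is affine in the scalar input with slope equal to the switching function $\lambda^\top(t) g^\star[t]$, so the Hamiltonian maximization condition over $\mathcal{D}(x^\star(t))$ forces $u^\star(t)$ to the upper endpoint whenever that slope is positive. Your additional care about nonemptiness and compactness of $\mathcal{D}(x^\star(t))$ and the union of the two null sets is a reasonable tightening of what the paper leaves implicit.
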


The positive $\lambda^\top g^\star$ in \eqref{eq:No Swiching} leads to a special class of bang-ride control, compared to being non-zero in \eqref{eq:Swiching}. When there is only an input constraint $u \in [u_{\mathrm{min}}, u_{\mathrm{max}}]$, \eqref{eq:No Swiching} and \eqref{eq:maxControl} lead to $u^\star=u_{\mathrm{max}}$. When other path constraints are present, \eqref{eq:maxControl} shows that the optimal input is the maximum feasible input. Our goal is to develop conditions for \eqref{eq:No Swiching}.

 \subsection{Limitation of Controllability}
As controllability and the regularity conditions establish $\lambda^\top g \not=0$ in Theorem~\ref{thm:Main}, a straightforward way to attain $\lambda^\top g >0$ is to extend Theorem~\ref{thm:Main}:
 \begin{coro} \label{coro:ControlMax}
   In the setting of Theorem~\ref{thm:Main}, if it holds additionally that for every $\bar{s}_i$ active for a non-zero time interval, $ \partial \bar{s}_i^\star[t] /\partial u>0$ holds for a.e. $t$ within its active time interval, then \eqref{eq:No Swiching} and \eqref{eq:maxControl} hold.
 \end{coro}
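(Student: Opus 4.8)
The plan is to upgrade the inequality $\lambda^\top(t) g^\star[t]\neq 0$ established in Theorem~\ref{thm:Main} to the strict sign condition $\lambda^\top(t)g^\star[t]>0$ by reading off the sign of the switching function from the stationarity condition~\eqref{PMP:Lderive}. I would first record that, in the setting of Theorem~\ref{thm:Main}, the proof of that theorem already delivers the switching condition~\eqref{eq:Swiching}, i.e.\ $\lambda^\top(t)g^\star[t]\neq 0$ for a.e.\ $t$, as well as the bang-ride property, i.e.\ the existence of an active constraint for a.e.\ $t$. So only the sign remains to be determined.

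Collecting the multipliers into $\bar{\mu}:=(\mu^\top,\nu^\top)^\top\leqslant 0$ and using the definition~\eqref{eq:GeneralConstraints} of $\bar{s}$, the stationarity condition~\eqref{PMP:Lderive} becomes
$$
\lambda^\top(t) g^\star[t] + \sum_{i=1}^{c_s+c_h}\bar{\mu}_i(t)\,\frac{\partial \bar{s}_i^\star[t]}{\partial u}=0 \quad\text{for a.e. }t\in[0,t_f],
$$
with complementary slackness forcing $\bar{\mu}_i(t)=0$ whenever $\bar{s}_i$ is inactive at $t$. By Assumption~\ref{ass:Regu}b at most one constraint is active at a.e.\ $t$, and bang-ride gives at least one; hence for a.e.\ $t$ there is a unique active index $i(t)$ and the sum collapses to $\lambda^\top(t)g^\star[t]=-\bar{\mu}_{i(t)}(t)\,\partial\bar{s}_{i(t)}^\star[t]/\partial u$.

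It then remains to see that the active constraint is always one to which the extra hypothesis applies. Since there are only finitely many constraints and $u^\star$ is piecewise continuous with finitely many discontinuities, the set of times at which the active constraint is active only on a null set is itself null and may be discarded; for the remaining a.e.\ $t$, the active constraint $\bar{s}_{i(t)}$ is active on a non-zero time interval, so the hypothesis yields $\partial\bar{s}_{i(t)}^\star[t]/\partial u>0$. Since $\lambda^\top(t)g^\star[t]\neq 0$ by~\eqref{eq:Swiching}, we get $\bar{\mu}_{i(t)}(t)\neq 0$, and with $\bar{\mu}_{i(t)}(t)\leqslant 0$ this forces $\bar{\mu}_{i(t)}(t)<0$; therefore $\lambda^\top(t)g^\star[t]=-\bar{\mu}_{i(t)}(t)\,\partial\bar{s}_{i(t)}^\star[t]/\partial u>0$, which is~\eqref{eq:No Swiching}. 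Applying the lemma immediately preceding this corollary then gives~\eqref{eq:maxControl}.

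The step I expect to be the main obstacle is the bookkeeping around constraints with non-positive input-sensitivity, above all the lower input bound $u_{\min}-u\leqslant 0$, for which $\partial \bar{s}_i/\partial u=-1$: one must show such a constraint cannot be the active constraint on a set of positive measure. This is where the hypothesis is really used: if that constraint were active on a non-zero interval, the assumption $\partial\bar{s}_i^\star/\partial u>0$ would fail, so the hypothesis implicitly rules out $u^\star=u_{\min}$ (and more generally any $u^\star$ riding a constraint with non-positive input-sensitivity) except on a null set. I would make this reduction explicit rather than leave it implicit, and I would also state carefully that~\eqref{eq:Swiching} --- not merely the bang-ride conclusion --- is available in the setting of Theorem~\ref{thm:Main}, since the theorem is phrased in terms of bang-ride but its proof passes through~\eqref{eq:Swiching}.
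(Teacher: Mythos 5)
Your proposal is correct and follows essentially the same route as the paper: both read off $\lambda^\top g^\star = -\bar{\mu}_{i(t)}\,\partial\bar{s}_{i(t)}^\star/\partial u$ from the stationarity condition~\eqref{PMP:Lderive}, use $\bar{\mu}_{i(t)}\leqslant 0$ together with the hypothesis $\partial\bar{s}_{i(t)}^\star/\partial u>0$ to get $\lambda^\top g^\star\geqslant 0$, and then upgrade to strict positivity via the nonvanishing of the switching function established in (the proof of) Theorem~\ref{thm:Main}, after which~\eqref{eq:maxControl} follows from the preceding lemma. Your extra bookkeeping about null sets and about constraints with non-positive input-sensitivity is sound but only makes explicit what the paper's two-line proof leaves implicit.
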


However, condition $ \partial \bar{s}_i^\star[t] /\partial u>0$ cannot be verified a priori as the optimal trajectory is unknown, an inherent problem due to PMP. In this case, other informative properties that can be analyzed a priori, such as system properties, are desired.  

Nevertheless, as both systems in Example~\ref{exam:1} are controllable, controllability is not informative enough to distinguish \eqref{eq:No Swiching} from the general bang-ride control. This motivates us to exploit monotonicity. As shown later in Example~\ref{exam:Analysis}, this direction leads to system properties that can distinguish the two systems in Example~\ref{exam:1}.

\subsection{Monotonicity}
Condition~\eqref{eq:No Swiching} can be guaranteed by one of the conditions:
\begin{itemize}
    \item $\lambda(t)>0$ and $g^\star[t]>0$; 

    \item $\lambda(t)<0$ and $g^\star[t]<0$; 
\end{itemize}
We consider the first case, and the second case follows analogously. As $g^\star[t]>0$ can be easily verified given a trajectory, the focus is on ensuring $\lambda>0$. This is stronger than requiring $\lambda\not=0$ as in Section~\ref{sec:ConstraintQuali}. 

First, imposing monotonicity on the terminal objective and constraint can ensure $\lambda(t_f)>0$. We collect the terminal constraint, the terminal objective, and their multipliers as
\begin{equation*}
\bar{z}_i= \begin{cases}
 z_i ,& i=1,\dots, c_z,\\
 \phi, & i=c_z+1.
\end{cases} \quad \bar{\alpha}_i = \begin{cases}
   \alpha_i, & i =1,\dots, c_z,\\
\lambda_0, & i=c_z+1.
\end{cases}    
\end{equation*}
Define $$\mathcal{I}_f = \{i = 1,\dots,c_z+1 \mid \bar{\alpha}_i \not=0\},$$
which contains the indices of the terminal constraint/objective that affect the optimal solution. Then, introduce the following assumptions on the terminal components:
\begin{ass}[Monotonicity in terminal components]\label{ass:MonoTf} Assume that $\partial \bar{z}^\star_i[t_f] / \partial x \leqslant 0$ for any $i \in \mathcal{I}_{f}$.
\end{ass}

Assumption~\ref{ass:MonoTf} requires the terminal components to decrease monotonically. Note that the inactive terminal constraints, whose associated multipliers are zero based on \eqref{PMP:CS}, are excluded from the above restriction via set $\mathcal{I}_f$. 

Minimizing a decreasing terminal objective $\phi$ leads to an incentive for a larger state, which can be achieved by applying a larger input for special system classes. This is related to monotonicity of the system.
\begin{ass}[Monotonicity in system dynamics] \label{ass:MonoSys}
The system satisfies the conditions
\begin{enumerate}[label=(\alph*)]
    \item $g^\star[t]>0$,
    \item $F_x^\star[t]$ is a Metzler matrix,
\end{enumerate}
along the optimal trajectory $(x^\star,u^\star)$ for a.e. $t\in [0,t_f]$.
\end{ass}

Assumption~\ref{ass:MonoSys}a is useful for establishing \eqref{eq:No Swiching}. Moreover, based on \cite[Prop. 3.3]{angeli2003monotone}, Assumption~\ref{ass:MonoSys} indicates that the system is a \textit{monotone system} \cite{angeli2003monotone} in a neighborhood of the optimal trajectory, i.e., a larger initial state and larger input lead to a larger state trajectory. Assumption~\ref{ass:MonoSys}b can also be guaranteed by the following more explicit conditions:
 \begin{coro}
Suppose that Assumption~\ref{ass:Regu} holds. If $f_x^\star[t]$ and $g_x^\star[t]$ are Metzler matrices and $u^\star(t) \geqslant 0$, then $F_x^\star[t]$ is a Metzler matrix.
 \end{coro}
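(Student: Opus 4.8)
The plan is to reduce the claim to an elementary entrywise sign computation. Since the input is scalar, $F(x,u) = f(x) + g(x)u$ gives, by linearity in $u$ of the drift decomposition, the Jacobian identity $F_x(x,u) = f_x(x) + g_x(x)\,u$, where $g_x$ denotes the $n\times n$ Jacobian of the vector field $g:\mathbb{R}^n\to\mathbb{R}^n$. Assumption~\ref{ass:Regu}a guarantees that $f$ and $g$ are continuously differentiable, so $f_x$ and $g_x$ are well defined along the optimal trajectory, and hence so is $F_x^\star[t] = f_x^\star[t] + g_x^\star[t]\,u^\star(t)$ for a.e.\ $t$.

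Next I would fix an arbitrary off-diagonal pair $(i,j)$ with $i\neq j$ and examine the corresponding entry $\bigl(F_x^\star[t]\bigr)_{ij} = \bigl(f_x^\star[t]\bigr)_{ij} + \bigl(g_x^\star[t]\bigr)_{ij}\,u^\star(t)$. Because $f_x^\star[t]$ is Metzler we have $\bigl(f_x^\star[t]\bigr)_{ij}\geqslant 0$, because $g_x^\star[t]$ is Metzler we have $\bigl(g_x^\star[t]\bigr)_{ij}\geqslant 0$, and by hypothesis $u^\star(t)\geqslant 0$; therefore the product $\bigl(g_x^\star[t]\bigr)_{ij}\,u^\star(t)\geqslant 0$ and the sum of the two non-negative terms is non-negative. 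Since $(i,j)$ was an arbitrary off-diagonal index, every off-diagonal entry of $F_x^\star[t]$ is non-negative, i.e.\ $F_x^\star[t]$ is a Metzler matrix for a.e.\ $t\in[0,t_f]$, which is the claim.

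There is essentially no obstacle here: the only points that need a word of care are (i) recording the Jacobian identity $F_x = f_x + g_x u$ explicitly, which relies on the scalar-input structure \eqref{eq:sys}, and (ii) noting that the diagonal entries play no role since the Metzler property only constrains off-diagonal signs. The continuous differentiability in Assumption~\ref{ass:Regu}a is invoked merely to make all the Jacobians in play meaningful along the optimal trajectory.
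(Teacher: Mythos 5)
Your proof is correct and is exactly the intended argument: the paper states this corollary without proof precisely because it reduces to the identity $F_x^\star[t] = f_x^\star[t] + g_x^\star[t]\,u^\star(t)$ and the entrywise observation that each off-diagonal entry is a sum of non-negative terms. Nothing is missing.
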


Finally, the path constraints and path objective $l$ should behave well so that the optimal solution prefers a larger input. 
\begin{ass}[Monotonicity in path components] \label{ass:MonoPath} \hspace{1mm}
\begin{enumerate}[label=(\alph*)]
    \item $l^\star_x[t] \leqslant 0$ for a.e. $t \in [0,t_f]$.
    
    \item For every $\bar{s}_i$ active for a non-zero time interval, 
    \begin{equation} \label{eq:PathMono}
\frac{\partial \bar{s}_i^\star[t]}{ \partial u} > 0, \text{ } \frac{\partial \bar{s}_i^\star[t] }{ \partial x} \leqslant 0,    \end{equation}
hold for a.e. $t$ within its active time interval.
\end{enumerate}
\end{ass}

Assumption~\ref{ass:MonoPath}a requires $l$ to be monotonically decreasing, analogously to Assumption~\ref{ass:MonoTf}, and thus a larger state is desired to decrease the objective. Assumption~\ref{ass:MonoPath}b imposes that changing $u$ and $x$ have opposite effects on constraint $\bar{s}_i$. 

We can now obtain an intuition about how the properties in Assumptions~\ref{ass:MonoTf}, \ref{ass:MonoSys}, and \ref{ass:MonoPath} affect the optimal solution. The monotonicity of the terminal components and the stage cost shows that an optimal solution prefers a larger state. This leads to a preference for a larger input, due to the monotonicity of the system. Finally, the larger input and the larger state do not tend to violate the constraints, as they have the opposite effect on the constraint $\bar{s}_i$. Under these properties, the optimal input is likely to be the maximum feasible input.

Indeed, the above intuition can be formalized as follows:
\begin{thm}  \label{thm:mono}
Given problem~\eqref{eq:OC} and its optimal trajectory $(x^\star,u^\star)$, suppose that the regularity conditions in Assumptions~\ref{ass:Regu}, \ref{eq:Termi}, and \ref{ass:ContBoundary} hold. If Assumptions~\ref{ass:MonoTf}, \ref{ass:MonoSys}, and \ref{ass:MonoPath} also hold, then $\lambda^\top g^\star[t]> 0 $ and $u^\star(t) = \max  \mathcal{D}(x^\star(t))$ for a.e. $t \in [0,t_f]$, with $ \mathcal{D}$ defined in \eqref{eq:ControlSet}, and the optimal trajectory is bang-ride.
\end{thm}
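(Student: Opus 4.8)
The plan is to reduce the three conclusions to a single componentwise sign statement on the costate, namely $\lambda(t)\geqslant 0$ for every $t\in[0,t_f]$. Once this is available the argument closes quickly: Proposition~\ref{prop:NonZeroLambda} already supplies $\lambda(t)\neq 0$ for a.e.\ $t$ under Assumptions~\ref{ass:Regu}, \ref{eq:Termi}, and \ref{ass:ContBoundary}, so for a.e.\ $t$ the vector $\lambda(t)$ is nonnegative with at least one strictly positive entry; since $g^\star[t]>0$ by Assumption~\ref{ass:MonoSys}a, this forces $\lambda^\top(t)g^\star[t]>0$, which is exactly \eqref{eq:No Swiching}. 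The implication \eqref{eq:No Swiching}~$\Rightarrow$~\eqref{eq:maxControl} was established earlier in Section~\ref{sec:Mono}, and \eqref{eq:No Swiching} is in particular \eqref{eq:Swiching}, so Lemma~\ref{lem:BangRide} yields the bang-ride property. Thus the whole theorem rests on proving $\lambda\geqslant 0$.

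To obtain $\lambda\geqslant 0$ I would integrate the costate dynamics backward from $t_f$. Rewrite \eqref{PMP:costate} as $\dot\lambda = -F_x^{\star\top}[t]\,\lambda - w_0[t]$ with $w_0 := \lambda_0 l_x^{\star\top}+s_x^{\star\top}\mu+\dot h_x^{\star\top}\nu$, and set $\tau := t_f-t$ to get the linear time-varying system $\mathrm{d}\lambda/\mathrm{d}\tau = F_x^{\star\top}\lambda + w_0$. Its system matrix $F_x^{\star\top}$ is Metzler, because $F_x^\star[t]$ is (Assumption~\ref{ass:MonoSys}b) and transposition preserves the Metzler property, so the associated state-transition matrix $\Phi(\tau,\sigma)$ is entrywise nonnegative for $\tau\geqslant\sigma$. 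The forcing satisfies $w_0\geqslant 0$ for a.e.\ $\tau$: by complementary slackness only multiplier components attached to active constraints survive, and on an active interval $\mu\leqslant 0$, $\nu\leqslant 0$ together with $\partial\bar s_i^\star/\partial x\leqslant 0$ (Assumption~\ref{ass:MonoPath}b) give $s_x^{\star\top}\mu\geqslant 0$ and $\dot h_x^{\star\top}\nu\geqslant 0$, while $\lambda_0\leqslant 0$ and $l_x^\star\leqslant 0$ (Assumption~\ref{ass:MonoPath}a) give $\lambda_0 l_x^{\star\top}\geqslant 0$. Crucially, $\lambda$ is continuous on $[0,t_f]$ under Assumption~\ref{ass:ContBoundary} (as used in the proof of Proposition~\ref{prop:NonZeroLambda}), so the backward propagation is not broken by the costate jumps \eqref{eq:PMPCostateJump} at entry times, which — having $\eta(\tau_i)\leqslant 0$ — could otherwise spoil nonnegativity. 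Then $\lambda(\tau)=\Phi(\tau,0)\lambda(t_f^-)+\int_0^\tau\Phi(\tau,\sigma)w_0(\sigma)\,\mathrm{d}\sigma\geqslant 0$ whenever $\lambda(t_f^-)\geqslant 0$.

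The remaining step, which I expect to be the main obstacle, is the terminal sign condition $\lambda(t_f^-)\geqslant 0$. From \eqref{PMP:costate-boundary}, $\lambda(t_f^-)=\lambda_0\phi_x^{\star\top}[t_f]+z_x^{\star\top}[t_f]\,\alpha+h_x^{\star\top}[t_f]\,\gamma$. The objective term is immediate: $\lambda_0\leqslant 0$, and whenever $\lambda_0\neq 0$ (so that $c_z+1\in\mathcal{I}_f$) Assumption~\ref{ass:MonoTf} gives $\phi_x^\star[t_f]\leqslant 0$, hence $\lambda_0\phi_x^{\star\top}[t_f]\geqslant 0$. For the terminal-constraint term one uses \eqref{PMP:CS} ($\alpha\geqslant 0$, $\alpha^\top z^\star[t_f]=0$) to restrict to the active $z_i$, for which $i\in\mathcal I_f$ and $\partial\bar z_i^\star[t_f]/\partial x\leqslant 0$ by Assumption~\ref{ass:MonoTf}; and for the state-constraint term one uses $\gamma\leqslant 0$, $\gamma^\top h^\star[t_f]=0$ together with Assumption~\ref{ass:ContBoundary}a — no constraint switching at $t_f$, so any state constraint active at $t_f$ has been ridden on an interval ending at $t_f$, on which Assumption~\ref{ass:MonoPath}b constrains $\dot h_i$ — to pin down the sign of $h_x^{\star\top}[t_f]\,\gamma$. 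Carrying out this terminal bookkeeping carefully, and verifying along the way that $\lambda$ genuinely has no jumps, is the delicate heart of the proof; the backward positive-system estimate of the second paragraph and the reduction of the first then deliver $\lambda\geqslant 0$, and hence $\lambda^\top g^\star>0$, $u^\star=\max\mathcal D(x^\star)$, and the bang-ride property.
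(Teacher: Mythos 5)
Your overall strategy is the paper's strategy: reduce everything to $\lambda(t)\geqslant 0$, obtain it by running the costate equation backward from $t_f$ as a positive linear time-varying system with nonnegative terminal condition, and then combine $\lambda\geqslant 0$, $\lambda\neq 0$ a.e.\ (Proposition~\ref{prop:NonZeroLambda}) and $g^\star>0$ to get $\lambda^\top g^\star>0$. The only structural difference is bookkeeping: the paper substitutes the explicit multiplier formula $\mu_{i(t)}=-\lambda^\top g^\star[\bar s_{i(t)}]_u^{-1}$ into the costate equation, absorbing the active-constraint term into a Metzler system matrix $\Gamma^\star=F_x^\star-g^\star[\bar s_{i(t)}]_u^{-1}[\bar s_{i(t)}]_x$, whereas you keep $s_x^{\star\top}\mu+\dot h_x^{\star\top}\nu$ as an exogenous nonnegative forcing. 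Both are valid and use the same sign information from Assumptions~\ref{ass:MonoSys} and~\ref{ass:MonoPath}.

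There is, however, a genuine gap exactly where you flag it: the term $h_x^{\star\top}[t_f]\,\gamma$ in \eqref{PMP:costate-boundary}. Assumption~\ref{ass:MonoTf} constrains only $\phi_x$ and $z_x$, and Assumption~\ref{ass:MonoPath}b constrains only the gradients of $\dot h_i$ (through $\bar s$), not of $h_i$ itself; so the combination you propose ($\gamma\leqslant 0$, complementary slackness, and Assumption~\ref{ass:MonoPath}b) cannot determine the sign of $h_x^{\star\top}[t_f]\,\gamma$ when $\gamma\neq 0$, and no amount of ``careful bookkeeping'' along that route will close it. The paper's resolution is sharper: the multiplier $\gamma$ (like the jumps $\eta(\tau_i)$) is attached to a switching of the active state-constraint set at the terminal time, so Assumption~\ref{ass:ContBoundary}a forces $\gamma=0$ outright; this is already established in the proof of Proposition~\ref{prop:NonZeroLambda}, which you invoke for continuity of $\lambda$ but do not exploit for $\gamma$. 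With $\gamma=0$ the terminal condition reduces to $\lambda(t_f^-)=\lambda_0\phi_x^{\star\top}[t_f]+z_x^{\star\top}[t_f]\alpha\geqslant 0$ directly from Assumption~\ref{ass:MonoTf} and \eqref{PMP:CS}, and the rest of your argument goes through as written.
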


Theorem~\ref{thm:mono} shows that the optimal input is the maximum feasible input under the monotonicity assumptions. An important special case is when there is only an input bound, as in the bang-bang control, then $u^\star=u_{\mathrm{max}}$. Assumptions~\ref{ass:MonoTf}, \ref{ass:MonoSys}, and \ref{ass:MonoPath} are analogous to the properties in \cite{taghavian2023selector,drummond2023constrained}. In this work, we take a different analytical approach by exploiting PMP. This allows us to address the regularity conditions formally and make an explicit connection to the bang-ride control. More importantly, it opens up new ways to generalize the result in Remark~\ref{rem:negativeLambda} and Section~\ref{sec:MonoGene}.

 \begin{rem}[Practical considerations] Assumptions~\ref{ass:MonoTf}, \ref{ass:MonoSys}, and \ref{ass:MonoPath} rely on the optimal trajectory and cannot be verified before knowing the optimal solution. One approach is to verify them a priori for any path constraint and any $(\hat{x},\hat{u})$ in a relevant region, determined by the application. A drawback is that not all practical constraints satisfy the conditions, e.g., $\bar{s}=u -u_{\mathrm{max}} \leqslant 0$ satisfies $\partial \bar{s}/\partial u>0$ but $\bar{s}=u_{\mathrm{min}} - u \leqslant 0$ has $\partial \bar{s}/\partial u<0$. A more practical approach is to verify these assumptions and Assumption~\ref{ass:ContBoundary} a posteriori, given a (possibly approximate) solution $(\hat{x},\hat{u})$, e.g., from the hybrid simulation. If the solution satisfies the assumptions and the necessary optimality condition $\hat{u}(t) = \max  \mathcal{D}(\hat{x}(t))$, then it is likely to be optimal.
 \end{rem}

  \begin{rem}[Generalization] \label{rem:negativeLambda} An analogous analysis can ensure $\lambda^\top g>0 $ via $\lambda<0$ and $g^\star<0$. The assumptions should be modified to consider $\partial \bar{z}^\star_i[t_f] / \partial x \geqslant 0$, $g^\star[t]<0$, $F_x^\star[t]$ is Metzler, $l^\star_x\geqslant 0$, $\partial \bar{s}_i^\star[t]/\partial u>0$, and $\partial \bar{s}_i^\star[t] / \partial x \geqslant 0$.  
 \end{rem}

  \begin{rem}[Linear systems]For a linear system \eqref{eq:LinearSys}, Assumption~\ref{ass:MonoSys} holds iff $B>0$ and $A$ is a Metzler matrix. Moreover, in \eqref{eq:OCdem}, the properties in the assumptions may be affected by a similarity transformation. For example, a linear system with $B<0$ can be transformed into an equivalent model with $B>0$. The challenge is to find a transformation to satisfy all assumptions simultaneously. Developing properties invariant to similarity transformation is desired, which is beyond the scope of this article.
 \end{rem}

 \subsection{Connection to Hybrid Simulation}
We make an explicit connection to the hybrid simulation algorithm.
\begin{prop} \label{prop:HybridSimu}
 Given an initial state $x(0)$ satisfying that $(x(0),u_{\mathrm{max}})$ does not activate any constraint except $u \in [u_{\mathrm{min}},u_{\mathrm{max}}]$, consider the solution trajectory $(\hat{x},\hat{u})$ of the hybrid simulation, and suppose that it satisfies Assumption~\ref{ass:Regu}. If every active constraint $\bar{s}_i$ satisfies $\partial \bar{s}_i(\hat{x}(t),\hat{u}(t)) / \partial u>0$ for a.e. $t$ within its active time interval, then $\hat{u}=\max  \mathcal{D}(\hat{x}(t))$ for a.e. $t \in [0,t_f]$.
\end{prop}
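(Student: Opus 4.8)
The plan is to verify the claim pointwise in time: for a.e.\ $t\in[0,t_f]$ I would show that the value $\hat u(t)$ produced by Algorithm~\ref{alg:example} equals $\max\mathcal D(\hat x(t))$, by examining the two branches of the algorithm separately. Two facts are used throughout. First, $\mathcal D(x)\subseteq\{u\le u_{\max}\}$ for every $x$, since the input bound $u-u_{\max}\le 0$ is one of the components of $s$. Second, $(\hat x,\hat u)$ is a feasible trajectory of \eqref{eq:OC} by construction of the hybrid simulation, and, together with Assumption~\ref{ass:Regu}b, at most one $\bar s_i$ is active at a.e.\ $t$. The two exhaustive cases are then: $\hat u(t)=u_{\max}$, or the simulation is riding an active constraint $\bar s_i$ that is not the input bound, so $\hat u(t)<u_{\max}$.

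For the times at which $\hat u(t)=u_{\max}$ (the \emph{else} branch, or equivalently riding the input bound), I would check that $u_{\max}\in\mathcal D(\hat x(t))$: feasibility gives $s(\hat x(t),u_{\max})\le 0$, and no state constraint is active there (by the definition of activeness of $\bar s_i$ for $i>c_s$), so the inequalities $\dot h_i(\hat x(t),u)\le 0$ in \eqref{eq:ControlSet} are vacuous; since $\mathcal D(\hat x(t))\subseteq\{u\le u_{\max}\}$ it follows that $\max\mathcal D(\hat x(t))=u_{\max}=\hat u(t)$. For the times at which the simulation rides some active $\bar s_i$, so that $\bar s_i(\hat x(t),\hat u(t))=0$ and, by hypothesis, $\partial\bar s_i(\hat x(t),\hat u(t))/\partial u>0$, I would first confirm $\hat u(t)\in\mathcal D(\hat x(t))$: all mixed constraints hold along the trajectory, the ridden constraint contributes $\bar s_i=0\le 0$, and by Assumption~\ref{ass:Regu}b every other $\bar s_j$ is strictly inactive, so the remaining conditions in \eqref{eq:ControlSet} hold with slack or vacuously. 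For maximality I would rule out every $u>\hat u(t)$ from $\mathcal D(\hat x(t))$. If $\bar s_i$ is one of the $\dot h$-type constraints, then $\bar s_i(\hat x(t),\cdot)=\mathbf{J}_h(\hat x(t))\big(f(\hat x(t))+g(\hat x(t))u\big)$ is affine in $u$ with positive slope $\partial\bar s_i/\partial u$, so $\bar s_i(\hat x(t),u)>0$ for \emph{every} $u>\hat u(t)$ and the corresponding constraint in \eqref{eq:ControlSet} is violated; if $\bar s_i$ is a (nonlinear) mixed path constraint, the positive sign of $\partial\bar s_i/\partial u$ excludes all $u$ in a right-neighbourhood of $\hat u(t)$, and, the remaining constraints being inactive at $\hat u(t)$, this identifies $\hat u(t)$ as the supremum of the feasible set in the scalar variable $u$. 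Collecting the two cases yields $\hat u(t)=\max\mathcal D(\hat x(t))$ a.e.

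The main obstacle is this last step for genuinely nonlinear mixed path constraints: a pointwise condition $\partial\bar s_i(\hat x(t),\hat u(t))/\partial u>0$ by itself only forbids larger inputs \emph{locally}, so passing from local non-improvability at $\hat u(t)$ to global maximality over all of $\mathcal D(\hat x(t))$ requires the feasible control set to be an interval in $u$ --- automatic for the $\dot h$-type constraints (affine in $u$), and the natural situation for the single-input path constraints considered here, but a point that should be made explicit. Everything else is bookkeeping about which constraint is active and an appeal to feasibility of the simulated trajectory, so the argument is short once this point is settled.
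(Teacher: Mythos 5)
Your proof is correct and follows essentially the same route as the paper, whose entire argument is the single sentence that any $u>\hat u$ violates the active constraint because $\partial \bar{s}_i(\hat{x}(t),\hat{u}(t))/\partial u>0$; your version merely makes explicit the feasibility of $\hat u(t)$, the two-branch case analysis, and the role of Assumption~\ref{ass:Regu}b. The local-to-global caveat you flag at the end is genuine, but the paper's own one-line proof silently makes the same leap from a pointwise positive derivative to exclusion of \emph{all} $u>\hat u(t)$, so spelling out that the feasible input set must be an interval (or that $\partial\bar s_i/\partial u>0$ holds on the whole input range) is a refinement of the paper's argument rather than a departure from it.
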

\begin{proof}
This result holds because an input $u>\hat{u}$ violates the constraint $\bar{s}_i$ due to $\partial \bar{s}_i(\hat{x}(t),\hat{u}(t)) / \partial u>0$.
\end{proof}

Proposition~\ref{prop:HybridSimu} shows that the solution of the hybrid simulation satisfies \eqref{eq:maxControl} for its state trajectory. Recall that, under the assumptions of Theorem~\ref{thm:mono}, \eqref{eq:maxControl} becomes a necessary optimality condition. Therefore, if these assumptions are satisfied by the solution, the solution also satisfies the necessary optimality condition \eqref{eq:maxControl}. In this case, the solution of the hybrid simulation is likely to be optimal.

Moreover, the hybrid simulation assumes that $(x(0),u_{\mathrm{max}})$ does not activate any constraint. Property~\eqref{eq:maxControl} suggests a way to generalize the algorithm: If $(x(0),u_{\mathrm{max}})$ activates constraints, the input can be selected according to \eqref{eq:maxControl}, i.e., taking the maximum feasible input.
 
 \section{Linear Systems with External Positivity for Costate Dynamics} \label{sec:MonoGene}
 \subsection{Introduction and General Results}
Some physical constraints do not satisfy the condition $\partial s/ \partial x \leqslant 0 $ in \eqref{eq:PathMono}. An effort is made in \cite{taghavian2023selector} to address this limitation; however, their result is limited to linear constraints. Moreover, instead of requiring $\lambda>0$, $\lambda^\top(t)g$ can remain positive even if some elements of $\lambda(t)$ become negative. We establish a novel connection between this phenomenon and the problem of external positivity:
\begin{prob}[External positivity] \label{prob:Costate}
For a time-varying dynamical system
\begin{align} \label{eq:external}
 \dot{\lambda}(t) = A(t) \lambda(t)+ B(t) p(t), \quad y(t)= g^\top (t) \lambda(t),
\end{align}
 under what conditions is $y(t) > 0$ for a.e. $t$?
\end{prob}

The system \eqref{eq:external} with a positive output $y$ is often said to be \textit{externally positive} \cite{drummond2023externally}. In this work, the dynamical system \eqref{eq:external} arises from the costate equation \eqref{PMP:costate}, and the switching function $\lambda^\top g$ in \eqref{eq:No Swiching} is the output. Problem~\ref{prob:Costate} is itself a complex and open research topic \cite{drummond2023externally,weller2023external}, where the results are often necessary conditions or sufficient conditions limited to special settings. Moreover, the existing results mainly focus on time-invariant systems. Therefore, we focus on simpler costate equations, arising from the optimal control problem of linear systems with $l\equiv 0$. The goal is to establish $\lambda^\top(t) g^\star[t]>0$ by restricting the costate dynamics.

Consider the optimal control \eqref{eq:OC} with the linear system instead:
\begin{equation} \label{eq:SimpleLinear}
\dot{x} = \mathrm{diag}(a_1,\dots,a_n) x + 1_{n\times 1} u.
\end{equation}
The only loss of generality here is that $B = 1_{n\times 1}$ does not contain zeros, as any linear system~\eqref{eq:LinearSys} can be equivalently transformed to have a diagonal $A$ and normalized entries in $B$. Note that \eqref{eq:SimpleLinear} already satisfies Assumption~\ref{ass:MonoSys} and is a monotone system.

With this linear system, the following quantity of the constraints in \eqref{eq:GeneralConstraints} is important:
\begin{defn}
The function $ p_{i,j}$, defined as
 \begin{equation}
 p_{i,j}(x(t),u(t)): =    \left[\frac{\partial \bar{s}_i(x(t),u(t))}{ \partial u}\right]^{-1}  \frac{\partial \bar{s}_i(x(t),u(t))}{ \partial {x_j}}, 
\end{equation}
is the \textit{relative sensitivity} of the constraint $\bar{s}_i$ with respect to state $x_j$, where $i \in \{1,2,\dots,c_s+c_h\}$ and $j \in \{1,\dots,n\}$.
\end{defn}

The sign of $p_{i,j}$ indicates whether the input and the state affect the constraint in the same direction. Its magnitude shows whether the constraint is more sensitive to input or state.

Then we replace Assumptions~\ref{ass:MonoTf} and \ref{ass:MonoPath} by the following assumption:
\begin{ass} \label{ass:Linear}
For the optimal control problem \eqref{eq:OC} with the system~\eqref{eq:SimpleLinear} and its optimal trajectory:
\begin{enumerate}[label=(\alph*)]
 \item There exist $j_1$, $j_2 \in \{1,\dots,n\}$ such that\footnote{Note that $\lambda(t_f^-) =\lambda(t_f)$ under Assumption~\ref{ass:ContBoundary}(a), as $\lambda(t)$ becomes left continuous. Recall that when computing $\lambda(t^-_f)$ from \eqref{PMP:costate-boundary}, $\lambda_0$ is often normalized to be $-1$ if non-zero.} $\lambda_{j_1}(t_f^-) > \lambda_{j_2}(t_f^-)$ and $a_{j_1} >a_{j_2}$.

\item $a_k \geqslant a_j$ if $\lambda_k(t_f^-)  \geqslant \lambda_j(t_f^-)$, for any $k,j \leqslant n$.

    \item For every $\bar{s}_i$ active for a non-zero time interval, the conditions
    \begin{enumerate}[label=(\roman*)]   
        \item $ 
\partial \bar{s}_i^\star[t]/ \partial u > 0$,
\item $p^\star_{i,k}[t]\leqslant p^\star_{i,j}[t] $ if $\lambda_k(t_f^-)  \geqslant \lambda_j(t_f^-)$, for any $k,j \leqslant n$,
    \end{enumerate}
hold for a.e. $t$ within its active time interval.
\end{enumerate}
\end{ass}

With the above assumption, the result in Theorem~\ref{thm:mono} remains true.

\begin{thm}  \label{thm:linear}
For the optimal control problem \eqref{eq:OC} with the system~\eqref{eq:SimpleLinear} and its optimal trajectory $(x^\star,u^\star)$, suppose that Assumptions~\ref{ass:Regu}, \ref{eq:Termi}a, and \ref{ass:ContBoundary} hold. If Assumption~\ref{ass:Linear} also holds, then $\lambda^\top g^\star[t]> 0 $ and $u^\star(t) = \max  \mathcal{D}(x^\star(t))$ for a.e. $t \in [0,t_f]$, with $ \mathcal{D}$ defined in \eqref{eq:ControlSet}, and the optimal trajectory is bang-ride.
\end{thm}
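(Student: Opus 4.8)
The plan is to follow the same skeleton as the proof of Theorem~\ref{thm:mono}: reduce the claim to showing $\lambda^\top(t) g^\star[t] > 0$ for a.e.\ $t$, since Assumption~\ref{eq:Termi}a together with Assumption~\ref{ass:ContBoundary} already gives $\lambda(t) \neq 0$ for all $t$ (Proposition~\ref{prop:NonZeroLambda}), and once $\lambda^\top g^\star > 0$ is established, the maximality $u^\star(t) = \max\mathcal{D}(x^\star(t))$ and the bang-ride property follow from Lemma~2.2 (the $\lambda^\top g^\star > 0$ consequence of PMP) and the bang-ride lemmas. Since $g^\star[t] = 1_{n\times 1}$ for the system~\eqref{eq:SimpleLinear}, the switching function is simply $\lambda^\top(t) g^\star[t] = \sum_{j=1}^n \lambda_j(t)$, so the real content is: \emph{the sum of the costates stays strictly positive backward in time from $t_f$.}

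First I would write down the costate ODE explicitly. From \eqref{PMP:costate} and \eqref{eq:Lagrange} with $l\equiv 0$, linear diagonal dynamics, and the active constraint $\bar s_i$, one gets $\dot\lambda_j = -a_j \lambda_j - \mu_i \,\partial \bar s_i^\star/\partial x_j$ (or $-\nu$ times the corresponding derivative of $\dot h$) on each interval where $\bar s_i$ is active, and $\dot\lambda_j = -a_j\lambda_j$ on free-arc intervals. Eliminating the multiplier via the stationarity condition $\partial L^\star/\partial u = 0$, i.e.\ $\sum_j \lambda_j + \mu_i\, \partial\bar s_i^\star/\partial u = 0$, lets me solve $\mu_i = -(\partial\bar s_i^\star/\partial u)^{-1}\sum_j\lambda_j$ and hence rewrite the costate dynamics purely in terms of $\lambda$ and the relative sensitivities $p_{i,j}^\star$: on an active arc, $\dot\lambda_j = -a_j\lambda_j + p_{i,j}^\star \big(\sum_k \lambda_k\big)$. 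This is exactly the form \eqref{eq:external} with output $y = \sum_j\lambda_j$, and Assumption~\ref{ass:Linear}(c)(i) guarantees $\mu_i$ is well-defined with the correct sign. The key structural observation is that the ordering of the $a_j$'s matches the ordering of the terminal costates $\lambda_j(t_f^-)$ (Assumption~\ref{ass:Linear}(b)), and the relative sensitivities respect the \emph{opposite} ordering (Assumption~\ref{ass:Linear}(c)(ii)).

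The main step — and the main obstacle — is a monotonicity/ordering argument run backward in time: I would show by a comparison/invariance argument that the ordering $\lambda_k(t) \geq \lambda_j(t)$ whenever $\lambda_k(t_f^-) \geq \lambda_j(t_f^-)$ is preserved for all $t \le t_f$, and simultaneously that $\sum_j \lambda_j(t)$ stays positive. Concretely, look at the difference $d_{kj}(t) := \lambda_k(t) - \lambda_j(t)$; on an active arc its derivative is $\dot d_{kj} = -a_k\lambda_k + a_j\lambda_j + (p_{i,k}^\star - p_{i,j}^\star)\sum_m\lambda_m$. Assuming inductively (backward) that $\sum_m \lambda_m > 0$ and the ordering holds, the sign conditions in Assumption~\ref{ass:Linear}(b),(c)(ii) should force $d_{kj}$ not to change sign as $t$ decreases — this is where one must be careful at the boundary of the invariant region ($d_{kj} = 0$) and handle the costate jumps at entry times via \eqref{eq:PMPCostateJump}, noting that $h^\star_x$ for a pure state constraint on system~\eqref{eq:SimpleLinear} must itself have compatible sign structure, or invoking Assumption~\ref{ass:ContBoundary} to control the jump. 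Given the preserved ordering, $S(t):=\sum_j\lambda_j$ satisfies an ODE $\dot S = \sum_j(-a_j\lambda_j) + (\sum_i p_{i,j}^\star) S$ on active arcs and $\dot S = \sum_j(-a_j\lambda_j)$ on free arcs; using Assumption~\ref{ass:Linear}(a) ($\lambda_{j_1}(t_f^-) > \lambda_{j_2}(t_f^-)$ with $a_{j_1} > a_{j_2}$, giving a strict gap that prevents $S$ from hitting zero) together with the ordering, I would bound $\sum_j a_j\lambda_j$ below by $(\min_j a_j) S$ or a similar Chebyshev-type sum inequality, obtaining a linear differential inequality $\dot S \le cS$ that, integrated backward from $S(t_f^-) > 0$, yields $S(t) > 0$ on all of $[0,t_f]$. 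Closing this backward induction cleanly — in particular showing strict positivity rather than just nonnegativity, and correctly splicing free arcs, active arcs, and entry-time jumps — is the delicate part; the rest is the routine reduction to Lemma~2.2 and the bang-ride lemmas already proved.
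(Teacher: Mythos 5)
Your skeleton matches the paper's: reduce to showing $1^\top\lambda(t)>0$, eliminate the multiplier via $\partial L^\star/\partial u=0$ to get $\dot\lambda_k=-a_k\lambda_k+p^\star_{i,k}[t]\,\psi(t)$ with $\psi:=1^\top\lambda$, prove order preservation of the costates, and finish with a Chebyshev-type sum inequality. However, there is a genuine gap at the center of your argument: you propose to establish the ordering $\lambda_k(t)\geqslant\lambda_j(t)$ and the positivity $\psi(t)>0$ \emph{simultaneously} by a backward induction on an invariant region, and you yourself flag that closing this at the boundary $d_{kj}=0$ is "the delicate part." The circularity is real — your ordering argument needs $\psi\geqslant 0$ to control the sign of the forcing term $(p^\star_{i,k}-p^\star_{i,j})\psi$, while your positivity argument needs the ordering to invoke Chebyshev — and you do not supply the idea that breaks it. The paper breaks it by observing that $\psi(t)\geqslant 0$ holds \emph{unconditionally and a priori}: from \eqref{PMP:Lderive} one has $\mu_{i(t)}=-\lambda^\top g^\star\,[\bar s_{i(t)}]_u^{-1}$ whenever $1^\top\lambda\not=0$ (a constraint must then be active), and combining $\mu\leqslant 0$ with Assumption~\ref{ass:Linear}c(i), $\partial\bar s^\star_i/\partial u>0$, forces $1^\top\lambda\geqslant 0$. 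With $\psi\geqslant 0$ in hand, order preservation is proved by contradiction via the variation-of-constants formula $\lambda_k(t_f)=e^{-a_k(t_f-t)}\lambda_k(t)+\int_t^{t_f}e^{-a_k(t_f-\tau)}p^\star_k[\tau]\psi(\tau)\,d\tau$ and Assumptions~\ref{ass:Linear}b and c(ii), with no induction needed; then strict positivity follows by supposing $1^\top\lambda\equiv 0$ on an interval, which forces $\sum_i a_i\lambda_i=0$ there, contradicting the strict Chebyshev inequality $\sum_i a_i\lambda_i>\frac{1}{n}(\sum_i a_i)(\sum_i\lambda_i)=0$ that Assumption~\ref{ass:Linear}a guarantees.

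Two smaller points. First, your concern about costate jumps at entry times is moot: Assumption~\ref{ass:ContBoundary} makes $\lambda$ continuous on $[0,t_f]$ and $\gamma=0$, as already established in the proof of Proposition~\ref{prop:NonZeroLambda}, so there are no jumps to splice. Second, your proposed differential inequality $\dot S\leqslant cS$ integrated backward would indeed yield $S(t)>0$ once the ordering is known and the relative sensitivities are bounded, so it is a viable alternative ending; but it does not repair the main gap, since it still presupposes the order preservation whose proof you have not closed.
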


Assumption~\ref{ass:Linear} is designed so that the combined effect of the terminal condition $\lambda(t_f)$, the system parameters $a_i$, and the constraints leads to an externally positive costate dynamics. Moreover, the costate dynamics is order-preserving, i.e., $\lambda_k(t_f) \geqslant \lambda_j(t_f)$ implies $\lambda_k(t) \geqslant \lambda_j(t)$ for all $t\leqslant t_f$, as shown in the proof of Theorem~\ref{thm:linear}.

Here, we compare Theorems~\ref{thm:mono} and~\ref{thm:linear}. First, the system class \eqref{eq:SimpleLinear}  in Theorem~\ref{thm:linear} satisfies Assumption~\ref{ass:MonoSys} in Theorem~\ref{thm:mono}. In Theorem~\ref{thm:linear}, $\lambda(t^-_f)$ may contain both positive and negative elements, unlike $\lambda(t_f^-) \geqslant 0$ in Theorem~\ref{thm:mono} due to Assumption~\ref{ass:MonoTf}. Moreover, Theorem~\ref{thm:linear} does not restrict the sign of $\partial \bar{s}_i^\star/ \partial x$. Also, note that compared to Theorem~\ref{thm:Main}, controllability is not required in Theorem~\ref{thm:linear} as Assumption~\ref{ass:Linear} allows identical diagonal elements in $A$.

The above generality of Theorem~\ref{thm:linear} is achieved by the additional restrictions on the system in Assumption~\ref{ass:Linear}b and on the relative sensitivity of the constraints in Assumption \ref{ass:Linear}c. Nonetheless, we view Theorem~\ref{thm:linear} as an important foundation for a more general solution in future research.

\begin{exam} \label{exam:Analysis}
Recall problem~\eqref{eq:SpecialOpti2}, whose system is in the system class~\eqref{eq:SimpleLinear}, and the solutions of the hybrid simulation in Fig.~\ref{fig:Special} for $a_1=0$, $a_2=-1$ and in Fig.~\ref{fig:Special2} for $a_1=-1$, $a_2=0$. From its terminal objective $\phi(t_f)=-x_1(t_f)$ and \eqref{PMP:costate-boundary}, we have $\lambda(t_f) = [1 \text{ } 0]^\top$, showing $\lambda_1(t_f)=1 > \lambda_2(t_f)=0$. Then, the case with  $a_1=0$ and $a_2=-1$ satisfies Assumptions~\ref{ass:Linear}ab due to $a_1>a_2$. However, the case with $a_1=-1$ and $a_2=0$ does not satisfy these assumptions.

The solutions of the hybrid simulation in both cases satisfy Assumption~\ref{ass:Linear}c: First, as $x(0)>0$ and $u\geqslant 0$, the system structure shows $x_1$ and $x_2$ are positive for all $t$ under the hybrid simulation. Therefore, $\partial \bar{s} /\partial u = x_1+x_2 >0$ holds for a.e. $t$ and thus satisfies Assumption~\ref{ass:Linear}c(i), where $\bar{s}$ denotes the constraint~\eqref{eq:SpecialOptiCons}. Second, the relative sensitivity of constraint~\eqref{eq:SpecialOptiCons} satisfies $p_{i,1} =p_{i,2} = (x_1+x_2)^{-1} u$ and Assumption~\ref{ass:Linear}c(ii).   

In summary, if $a_1=0$ and $a_2=-1$, the hybrid simulation satisfies Assumption~\ref{ass:Linear} and consequently also the necessary optimality condition based on Proposition~\ref{prop:HybridSimu}. Therefore, the hybrid simulation likely provides the optimal solution, which indeed matches the optimal solution in Fig.~\ref{fig:Special}. However, if $a_1=-1$ and $a_2=0$, the hybrid simulation does not satisfy Assumptions~\ref{ass:Linear}ab. The hybrid simulation is not optimal in that case, as shown in Fig.~\ref{fig:Special2}. \hfill$\triangle$
\end{exam}

 \subsection{Special Setting for Application} \label{sec:specialLinear}
We also consider a more special setting, where the terminal objective and terminal constraint are functions of a single state. Despite appearing restrictive, these settings are important in real-world applications, as shown in Section~\ref{sec:applyAnalysis}.

\begin{ass} \label{ass:SpecialLambda} 
For the optimal control problem \eqref{eq:OC} with the system~\eqref{eq:SimpleLinear}:
\begin{enumerate}[label=(\alph*)]
    \item $\lambda_1(t_f^-)>0$ and $\lambda_j(t_f^-) =0$ for all $j \in \{2,\dots,n\}$.

    \item $a_1 \geqslant a_j$, for all $j \in \{2,\dots,n\}$. Moreover, $a_1>a_k$ for some $k \in \{2,\dots,n\}$, where $x_k$ is the last (in time) state satisfying\footnote{This property is satisfied trivially if no such state exists.} $ \bar{s}^\star_j[t]/\partial x_k >0$ for a non-zero time interval and for some active constraint $\bar{s}^\star_j$.
 
    \item For every $\bar{s}_i$ that is active, the inequalities
 \begin{equation}  \label{eq:PosiS}
\frac{\partial \bar{s}_i^\star[t]}{ \partial u} > 0, \text{ } \frac{\partial \bar{s}_i^\star[t] }{ \partial x} \geqslant 0,    \end{equation}
hold for a.e. $t$ in its active time interval.
\end{enumerate}
\end{ass}

Assumption~\ref{ass:SpecialLambda}a shows the terminal ingredients only depend on $x_1$, based on \eqref{PMP:costate-boundary}. Condition~\ref{ass:SpecialLambda}b further restricts the corresponding system dynamics. Specifically, it requires a strict inequality $a_1 >a_k$ for the last state $x_k$ that has a positive effect on an active constraint. Finally, Assumption~\ref{ass:SpecialLambda}c allows $\partial \bar{s}_i^\star/ \partial x$ to be positive, in contrast to Assumption~\ref{ass:MonoPath}. 

With the above properties, the following result shows that the optimal input remains the maximum feasible input.
\begin{prop}  \label{prop:SpecialLinear}
For the optimal control problem \eqref{eq:OC} with the system~\eqref{eq:SimpleLinear} and its optimal trajectory $(x^\star,u^\star)$, suppose that Assumptions~\ref{ass:Regu}, \ref{eq:Termi}a, and \ref{ass:ContBoundary} hold. If Assumptions~\ref{ass:SpecialLambda} also holds, then $\lambda^\top g^\star[t]> 0 $ and $u^\star(t) = \max  \mathcal{D}(x^\star(t))$ for a.e. $t \in [0,t_f]$, with $ \mathcal{D}$ defined in \eqref{eq:ControlSet}, and the optimal trajectory is bang-ride.
\end{prop}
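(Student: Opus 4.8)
The plan is to reduce Proposition~\ref{prop:SpecialLinear} to Theorem~\ref{thm:linear} by verifying that Assumption~\ref{ass:SpecialLambda} implies Assumption~\ref{ass:Linear} (possibly after a relabeling of the state coordinates so that $x_1$ plays the distinguished role). The system class is already \eqref{eq:SimpleLinear}, and Assumptions~\ref{ass:Regu}, \ref{eq:Termi}a, and \ref{ass:ContBoundary} are shared hypotheses, so the entire task is the implication Assumption~\ref{ass:SpecialLambda} $\Rightarrow$ Assumption~\ref{ass:Linear}; the conclusion ($\lambda^\top g^\star[t]>0$, $u^\star(t)=\max\mathcal D(x^\star(t))$, bang-ride) then follows verbatim from Theorem~\ref{thm:linear}.

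First I would record the consequences of Assumption~\ref{ass:SpecialLambda}a for the terminal costate: $\lambda_1(t_f^-)>0$ and $\lambda_j(t_f^-)=0$ for $j\ge 2$, so the indices split into the single ``large'' index $1$ and the ``equal-and-small'' block $\{2,\dots,n\}$. For Assumption~\ref{ass:Linear}b I must check $a_k\ge a_j$ whenever $\lambda_k(t_f^-)\ge\lambda_j(t_f^-)$: the only nontrivial orderings of the terminal costate values are $\lambda_1(t_f^-)>\lambda_j(t_f^-)$ (handled by $a_1\ge a_j$ from Assumption~\ref{ass:SpecialLambda}b) and ties within $\{2,\dots,n\}$, where the required inequality is trivial ($a_k\ge a_j$ and $a_j\ge a_k$ are both demanded, but since $\lambda_k(t_f^-)=\lambda_j(t_f^-)$ for all such pairs, the antecedent forces the consequent to hold in both directions only if $a_k=a_j$ — this is the one genuine gap). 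So I would note that Assumption~\ref{ass:Linear}b, applied within the block $\{2,\dots,n\}$, would force all $a_j$, $j\ge 2$, to be equal, which Assumption~\ref{ass:SpecialLambda}b does not guarantee. The resolution is that Theorem~\ref{thm:linear}'s proof only uses Assumption~\ref{ass:Linear}b to establish order-preservation of the costate, and costate components whose terminal values coincide are indistinguishable in the ordering; I would therefore either (i) invoke the order-preservation argument of Theorem~\ref{thm:linear} directly rather than citing its statement as a black box, showing $\lambda_1(t)\ge\lambda_j(t)\ge 0$ for all $t\le t_f$ using only $a_1\ge a_j$ and the sign conditions \eqref{eq:PosiS}, or (ii) observe that because $B=1_{n\times 1}$ and the $\lambda_j$, $j\ge2$, all start from the same terminal value $0$ and obey decoupled scalar dynamics with a common forcing, they remain ordered consistently with their $a_j$'s without needing pairwise comparability. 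For Assumption~\ref{ass:Linear}a I need a strict separation $\lambda_{j_1}(t_f^-)>\lambda_{j_2}(t_f^-)$ with $a_{j_1}>a_{j_2}$: taking $j_1=1$ and $j_2=k$ with $k$ the index supplied by Assumption~\ref{ass:SpecialLambda}b gives $\lambda_1(t_f^-)>0=\lambda_k(t_f^-)$ and $a_1>a_k$, so this holds (and when no such $k$ exists the strict-positivity conclusion can be obtained from $\partial\bar s_i/\partial x\ge 0$ together with $g^\star>0$, $\lambda_1(t_f^-)>0$ directly). For Assumption~\ref{ass:Linear}c(i) the condition is literally \eqref{eq:PosiS}'s first inequality. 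For Assumption~\ref{ass:Linear}c(ii), the relative sensitivity inequality $p^\star_{i,k}[t]\le p^\star_{i,j}[t]$ when $\lambda_k(t_f^-)\ge\lambda_j(t_f^-)$: the only nontrivial case is $k=1$, $j\ge2$, requiring $p^\star_{i,1}[t]\le p^\star_{i,j}[t]$; since $\partial\bar s_i/\partial u>0$ and $\partial\bar s_i/\partial x\ge 0$ give $p^\star_{i,j}[t]\ge 0$ for every $j$, and $x_1$ is singled out in Assumption~\ref{ass:SpecialLambda}b as a state whose relative sensitivity one can bound, I would argue $p^\star_{i,1}[t]$ is dominated by the others — here I expect to need the more careful reading that $x_1$ does \emph{not} appear among the states with a strictly positive effect on active constraints beyond what $a_1>a_k$ controls, so $p^\star_{i,1}[t]\le p^\star_{i,k}[t]$ follows because $\bar s_i$ depends on $x_1$ only through a term whose relative sensitivity is no larger.

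The main obstacle I anticipate is exactly the mismatch between the block-of-ties structure in Assumption~\ref{ass:SpecialLambda} and the pairwise-comparability demanded by Assumption~\ref{ass:Linear}b and c(ii): a naive citation of Theorem~\ref{thm:linear} does not go through, and the honest route is to re-derive the order-preservation/external-positivity estimate of that theorem's proof under the weaker block hypothesis, exploiting that $B=1_{n\times 1}$ makes the forcing term common to all costate components and that the costate components in $\{2,\dots,n\}$ start equal at $t_f$. Once that estimate is in hand — $\lambda_1(t)\ge\lambda_j(t)\ge 0$ for all $j\ge2$ and all $t$, with strict positivity of $\lambda_1(t)$ inherited from $\lambda_1(t_f^-)>0$ and the sign conditions — the switching function satisfies $\lambda^\top g^\star[t]=\sum_i\lambda_i(t)>0$ because $g^\star=1_{n\times1}>0$, and the remaining conclusions ($u^\star=\max\mathcal D(x^\star)$, bang-ride) are immediate from the Lemma preceding Theorem~\ref{thm:mono} and from Lemma~\ref{lem:BangRide}.
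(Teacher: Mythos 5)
Your overall strategy --- reduce to Theorem~\ref{thm:linear} by showing that Assumption~\ref{ass:SpecialLambda} implies Assumption~\ref{ass:Linear} --- is correctly diagnosed as failing (ties within $\{2,\dots,n\}$ would force all $a_j$ to be equal, and Assumption~\ref{ass:Linear}c(ii) with $k=1$ would require $p^\star_{i,1}\leqslant p^\star_{i,j}$, which Assumption~\ref{ass:SpecialLambda} does not give). The paper does not attempt this reduction; it runs a separate sign analysis on the costate ODE $\dot\lambda_k=-a_k\lambda_k+p^\star_k[t]\psi(t)$ with $\psi:=1^\top\lambda$. But your proposed repair rests on a genuine error: the estimate you aim for, $\lambda_1(t)\geqslant\lambda_j(t)\geqslant 0$ for $j\geqslant 2$, has the wrong sign. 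Since $\lambda_j(t_f)=0$ and the forcing $p^\star_j\psi$ is nonnegative (here $\partial\bar s_i^\star/\partial x\geqslant 0$, so $p^\star_{i,j}\geqslant 0$, and $\psi\geqslant 0$ follows from \eqref{eq:mui} with $\mu\leqslant 0$), integrating the ODE backward from $t_f$ gives $\lambda_j(t)\leqslant 0$ for all $j\geqslant 2$, not $\geqslant 0$; only then does $\lambda_1=\psi-\sum_{j\geqslant2}\lambda_j\geqslant\psi\geqslant 0$ follow. This sign structure is the whole point of the proposition: the constraints are \emph{increasing} in the state (unlike in Theorem~\ref{thm:mono}), which drives the costates $\lambda_j$, $j\geqslant 2$, negative, so positivity of $\sum_i\lambda_i$ is not ``immediate from termwise nonnegativity'' as your last paragraph claims. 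Your fallback (ii) --- that the $\lambda_j$, $j\geqslant2$, obey decoupled dynamics ``with a common forcing'' --- is also false, because the forcing $p^\star_j[t]\psi(t)$ depends on $j$ through the relative sensitivity.

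The missing idea is how the strict inequality $a_1>a_k$ for the \emph{last} state $x_k$ with $\partial\bar s^\star_j/\partial x_k>0$ is actually used. The paper supposes $1^\top\lambda\equiv 0$ on some interval $(t_1,t_2)$; then $\dot\psi=0$ there gives $\sum_i a_i\lambda_i=0$, while $\lambda\not=0$ (Proposition~\ref{prop:NonZeroLambda}) together with $\lambda_1\geqslant 0\geqslant\lambda_j$ forces $\lambda_1>0$ and some $\lambda_j<0$ on the interval. A negative $\lambda_j$ can only be produced by a constraint active \emph{after} $t_2$, so by the ``last state'' clause of Assumption~\ref{ass:SpecialLambda}b the distinguished component satisfies $\lambda_k<0$ on $(t_1,t_2)$. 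Substituting $1^\top\lambda=0$ then yields $\sum_i a_i\lambda_i=\sum_{i\geqslant 2}(a_i-a_1)\lambda_i>0$ (each term is a product of two nonpositive factors, strictly negative ones for $i=k$), contradicting $\sum_i a_i\lambda_i=0$. None of this appears in your proposal, so the core of the argument is missing even though your diagnosis of why the naive reduction fails is sound.
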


Analogous to Example~\ref{exam:Analysis}, it can be shown that problem~\eqref{eq:SpecialOpti2} satisfies Assumption~\ref{ass:SpecialLambda}  and thus Proposition~\ref{prop:SpecialLinear} when $a_1=0$ and $a_2=-1$; however, the case with $a_1=-1$ and $a_2=0$ does not satisfy Assumption~\ref{ass:SpecialLambda}b.

\section{Application Example: Fast Battery Charging} \label{sec:Apply}
\subsection{Charging with the Single-Particle Model (SPM)}
The theory is illustrated by applying it to a battery fast-charging problem using the SPM \cite{liu2016extended}. The SPM describes the evolution of lithium concentration $c_j(r,t)$ over a spatial coordinate $r \in [0,R_j]$:
\begin{align} \label{eq:diffusion}
\frac{\partial c_j(r,t)}{\partial t} = \frac{D_{j,s}}{r^2} \frac{\partial}{ \partial r}\!\left( r^2 \frac{\partial c_{j}(r,t)}{\partial r } \right), 
\end{align}
where $D_{j,s}$ is a positive parameter and $j \in \{+,-\}$ denotes the index of the positive or negative electrode. The boundary conditions are
$$\frac{\partial c_\pm(0,t) }{\partial r} =0, \quad \frac{\partial c_\pm(R_\pm,t) }{\partial r} = \mp \frac{   I(t) }{m_\pm}, $$
where $m_j$ is a positive parameter, and $I$ is the input current to be controlled. The sign convention here is $I \geqslant 0$ for charging.

For our purposes, only the surface concentration $c_{j,\mathrm{s}}(t):=c_j(R_j,t)$ and the average concentration $c_{j,\mathrm{ave}}$ \cite{forman2010reduction} are of interest. Regarding these two quantities as two outputs leads to an approximation of \eqref{eq:diffusion} in 
the form of a linear state-space model\footnote{While each electrode is approximated by a $3$-dimensional linear system, leading to $6$ states in total, both electrodes contain an integrator, and thus one redundant state for integration is eliminated.}  \cite{park2020optimal,forman2010reduction}:
\begin{subequations} \label{eq:SPMmodelCombine}
\begin{align}  
\dot{x} &= \begin{bmatrix}
     A_+ & 0 \\
     0 & A_- 
 \end{bmatrix} x+ \begin{bmatrix}
     B_+\\
     B_-
 \end{bmatrix} I ,\\ \label{eq:SPMmodelCombineB}
\begin{bmatrix}
 c_{+,\mathrm{ave}} \\
 c_{+,\mathrm{s}} \\
 c_{-,\mathrm{ave}} \\
 c_{-,\mathrm{s}} \\
 \end{bmatrix} &=  \begin{bmatrix}
C_{+,\mathrm{ave}} \\
C_{+,\mathrm{s}} \\
 C_{-,\mathrm{ave}}\\
 C_{-,\mathrm{s}} \\
 \end{bmatrix} x,
\end{align}
\end{subequations}
where $x(t) \in \mathbb{R}^5$ and
\begin{align}
A_+ &= \mathrm{diag}(0,-0.0514,-0.4211), \nonumber \\
C_{+,s} &= \begin{bmatrix}
 -0.1639&    -0.1193& -0.8643 & 0_{1\times 2}
\end{bmatrix},  \nonumber \\
C_{+,\mathrm{ave}} &= \begin{bmatrix}
 -0.1639&    0_{1\times 4}
\end{bmatrix}, \label{eq:PosSS-Caver} \\
B_+&= 1_{3\times 1}, \text{ }
B_- = 1_{2\times 1},  \nonumber\\
A_- &= \mathrm{diag}(-0.2006,-1.6422), \nonumber\\
C_{-,s} &= \begin{bmatrix}
 0.1183& 0_{1\times 2} &0.0861 & 0.6237
\end{bmatrix},\nonumber \\
C_{-,\mathrm{ave}} &= \begin{bmatrix}
0.1183&  0_{1\times 4}
\end{bmatrix}.\nonumber
\end{align}
Note that $x_1$ is the integration of the input, due to the zero diagonal entry in $A_+$. 
The above parameters are computed following the model reduction procedure in \cite{forman2010reduction} and the physical parameter values in \cite{torchio2016lionsimba}. A similarity transformation is also conducted to obtain the model in the form of \eqref{eq:SimpleLinear}.%RDB: why are both x1 and x4 needed? if they are both integrations of the current, then why not replace x4 by x1 everywhere else in the model and delete that state?  --- Shengling: I agree. I have eliminated state x4.

The battery voltage $V$ is a nonlinear function \cite{moura2016battery}:\footnote{This is consistent with the voltage function in \cite{moura2016battery} with its last two terms for the electrolyte dynamics removed.} 
\begin{align}
V(c_{+,\mathrm{s}},c_{-,\mathrm{s}},I)  = & \text{ }\eta_{+ }(c_{+,\mathrm{s}}, I) - \eta_{- }(c_{-,\mathrm{s}} \nonumber, I)\\& + U_+\left(c_{+,s}(t)\right)- U_-\left(c_{+,s}(t) \right)+R_f I(t) \nonumber,
\end{align}
where $\eta_j$ denotes the overpotential, $U_j$ is the open-circuit potential \cite{torchio2016lionsimba}, and $R_f$ is a positive parameter. These functions and their parameters are presented in Appendix~G. Another important quantity is the state-of-charge (SOC):
$$
\mathrm{SOC}(c_{+,\mathrm{ave}}) = \frac{c_{+,\mathrm{ave}}/51554 - 0.9917}{ -0.4962},
$$
where $\mathrm{SOC} \in [0,1]$. Given an initial $\mathrm{SOC}$, an initial state can be computed following \cite{forman2010reduction} for simulation.

Consider the voltage-constrained fast-charging problem:
\begin{align} \label{eq:SPMcontrol}
 \min_{I} \text{ } &  - \mathrm{SOC}(c_{+,\mathrm{ave}}(t_f) )   \\
 \text{s.t. } & 
\eqref{eq:SPMmodelCombine} 
 \nonumber \\
 & I(t) \in [0, I_{\mathrm{max}}] \\
 & V(c_{+,\mathrm{s}}(t),c_{-,\mathrm{s}}(t),I)  \leqslant V_{\mathrm{max}}, 
\end{align}
with $I_{\mathrm{max}}=300 \text{ }\mathrm{A}$, $V_{\mathrm{max}} = 4.5 \text{ }\mathrm{V}$, and $t_f$ fixed. The goal is to increase SOC as fast as possible while limiting the current and voltage. Another formulation is a time-optimal problem with $t_f$ also being an optimization variable. The analysis of this formulation is analogous, but optimizing $t_f$ causes additional numerical challenges.

\subsection{Analysis of the Fast-Charging Problem} \label{sec:applyAnalysis}
It is well-known from numerical results \cite{berliner2022novel} that the optimal input is the CC-CV profile, which is consistent with the hybrid simulation, as also shown in Fig.~\ref{fig:SPM}. We provide a novel theoretical justification for this using Proposition~\ref{prop:SpecialLinear}.
 \begin{figure}[h]
    \centering
\hspace*{-0.6cm} \includegraphics[width=0.55\textwidth]{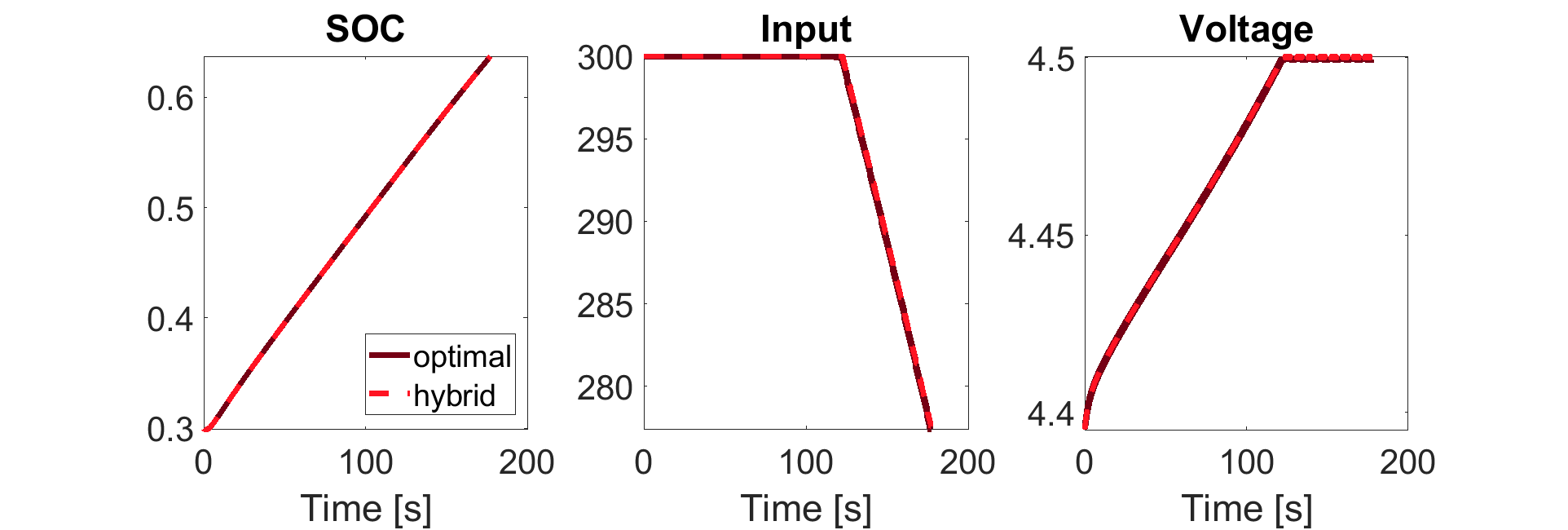}
    \caption{The optimal solution, the practical CC-CV profile, and the hybrid simulation are consistent.}
    \label{fig:SPM}
\end{figure}
 
 Recall that model~\eqref{eq:SPMmodelCombine} is in the form of \eqref{eq:SimpleLinear} and is a monotone system. For Assumption~\ref{ass:SpecialLambda}a, $\mathrm{SOC}$ depends on $c_{+,\mathrm{ave}}$ and thus only on the state $x_1$ due to \eqref{eq:PosSS-Caver}. We can compute $\lambda(t_f^-)$ from \eqref{PMP:costate-boundary} and thus from $\partial \mathrm{SOC}/ \partial x$. First, SOC satisfies
$$\frac{\partial \mathrm{SOC}}{\partial c_{+,\mathrm{ave}}}< 0,$$
i.e., the lithium concentration of the positive electrode decreases during charging. Thus, we have
\begin{equation} \label{eq:SOCpartial}
\frac{\partial \mathrm{SOC}}{\partial x} =\frac{\partial \mathrm{SOC}}{\partial c_{+,\mathrm{ave}}} \frac{\partial c_{+,\mathrm{ave}}}{ \partial x} = \frac{\partial \mathrm{SOC}}{\partial c_{+,\mathrm{ave}}} \begin{bmatrix}
  -0.1639 & 0_{1\times 4}  
\end{bmatrix} \geqslant 0.
\end{equation}
Combining the above with $\lambda_0=-1$ and \eqref{PMP:costate-boundary} shows $\lambda_1(t_f^-)>0$ and $\lambda_i(t_f^-)=0$ for all $i>1$, satisfying Assumption~\ref{ass:SpecialLambda}a.

For Assumption~\ref{ass:SpecialLambda}b, the system parameters satisfy
$a_1=0 \geqslant a_j$ for any $j \geqslant 2$. Moreover, for the solution of the hybrid simulation and the CC-CV profile, the voltage function is the last active constraint and satisfies the physical properties:
 \begin{equation} \label{eq:Vproperty}
  \frac{\partial V}{\partial c_{+,\mathrm{s}}}< 0,\quad \frac{\partial V}{\partial c_{-,\mathrm{s}}}> 0,\quad\frac{\partial V}{\partial I }> 0.
 \end{equation}
Analogous to \eqref{eq:SOCpartial}, combining \eqref{eq:Vproperty} and \eqref{eq:SPMmodelCombineB} shows
\begin{equation} \label{eq:Vproperty2}
\frac{\partial V}{ \partial x} > 0 ,\quad \frac{\partial V}{\partial I }>0.
\end{equation}
Then there exists $k=2$ such that $a_1=0>a_2 =-0.0514$, where $V$ is the last active constraint and $x_2$ satisfies $\partial V / \partial x_2>0$ when $V$ is active. Therefore, the CC-CV profile and hybrid simulation satisfy Assumption~\ref{ass:SpecialLambda}b. Note that choosing any $k\in\{2,3,4,5\}$ satisfies the above condition.

For Assumption~\ref{ass:SpecialLambda}c, the hybrid simulation and CC-CV profile activate $I = I_{\mathrm{max}}$ and the voltage constraint, satisfying Assumption~\ref{ass:SpecialLambda}c based on \eqref{eq:Vproperty2}.

In summary, Assumption~\ref{ass:SpecialLambda} is satisfied, and thus $u=\max\mathcal{D}(x)$ becomes a necessary optimality condition, based on Proposition~\ref{prop:SpecialLinear}. Then, the hybrid simulation and the CC-CV profile satisfy this necessary optimality condition, based on Proposition~\ref {prop:HybridSimu}. Although its optimality is well-known empirically, the above formal theoretical analysis is novel.

The bang-ride behavior can also be explained by Theorem~\ref{thm:Main} as \eqref{eq:SPMmodelCombine} is controllable. However, Theorem~\ref{thm:mono} and the result in \cite{taghavian2023selector} are not applicable, as they require the nonlinear constraint to be decreasing in state. The analysis in \cite{park2020optimal} is limited to linear constraints and thus is not applicable here.

\section{CONCLUSIONS}
This work has provided theoretical conditions under which heuristic input profiles and hybrid simulation methods in time-critical control problems coincide with necessary optimality. Using Pontryagin’s maximum principle, we showed that bang-ride structures arise under controllability and regular constraint switching, and that maximum-feasible inputs become necessary under monotonicity or external positivity of the costate dynamics. These results justify widely used charging heuristics and provide efficient, optimization-free certificates for their optimality.

Future research should address settings with mixed monotonicity, where some constraints increase and others decrease with the state, as well as system classes beyond finite-dimensional linear dynamics. Such generalizations may require tailored notions of external positivity or problem-specific sufficient conditions, and could extend the applicability of these results to PDE systems or nonlinear dynamics with input nonlinearities.

\bibliographystyle{IEEEtran} % use IEEEtran.bst style
\bibliography{hySi.bib}

\section*{Appendix}
\subsection{Discussion: Nonlinear Controllability, Feedback Linearization, and Differential Flatness} \label{Apx:SystemProper}
The system properties in Assumption~\ref{ass:Control} can be interpreted in multiple ways. The properties reduce to controllability when the system is linear, as $M_n$ becomes the controllability matrix. For nonlinear systems \eqref{eq:sys}, there are various forms of ``controllability". It has been noted in \cite{palanki1993synthesis} that the system considered in Assumption~\ref{ass:Control} is locally controllable in the sense of \cite{hunt1982sufficient}. Instead, we will make a connection to local accessibility by following \cite{nijmeijer1990nonlinear}. 

 Let $R^V(x_0,T)$ be the reachable set from state $x_0$ at time $T>0$ following trajectories which remain in a neighborhood $V$ of $x_0$ for $t\leqslant T$. 
\begin{defn}[\cite{nijmeijer1990nonlinear}]
 System \eqref{eq:sys} is said to be \textit{locally strongly accessible} at $x$ if for any $V$, $R^V(x,T)$ contains a non-empty open set for any $T>0$ sufficiently small.
\end{defn}

This property means that, from every state, the system can reach an open region close to the starting point, indicating a capability of being controlled. Note that this is a local concept. 

Locally strong accessibility can be determined by the rank (dimension) test of a nonlinear generalization of the controllability matrix (controllable space), called \textit{strong accessibility algebra}, whose formal definition is in \cite{nijmeijer1990nonlinear}. Then Assumption~\ref{ass:Control} implies locally strong accessibility.

\begin{prop} \label{prop:Controllability}
If Assumption \ref{ass:Cinf} is satisfied, the following holds:
\begin{enumerate}[label=(\alph*)]
    \item if Assumption~\ref{ass:Control}a holds, $\mathcal{L} =  \mathrm{span}\big\{g,\text{ }\mathrm{ad}_f g, \text{ }\mathrm{ad}^2_f g,\dots,\text{ }\mathrm{ad}^v_f g \mid v = 1,2,\cdots \big \}$ is the strong accessibility algebra of system \eqref{eq:sys}.

 \item if Assumption~\ref{ass:Control} holds, system \eqref{eq:sys} is locally strongly accessible at any $x\in \left\{x^\star(t)\mid t \in [0,t_f] \right\}$. 
\end{enumerate}
\end{prop}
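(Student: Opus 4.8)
The plan is to prove parts (a) and (b) in sequence, with (b) following quickly once (a) is established. Recall from \cite{nijmeijer1990nonlinear} that the \emph{strong accessibility algebra} $\mathcal{C}_0$ is the smallest Lie subalgebra of smooth vector fields that contains the input vector field $g$ and is invariant under $\mathrm{ad}_f$, i.e.\ $[f,X]\in\mathcal{C}_0$ whenever $X\in\mathcal{C}_0$; and that locally strong accessibility at $x$ follows from the rank condition $\dim\mathcal{C}_0(x)=n$. Accordingly, the content of (a) is the identification $\mathcal{C}_0=\mathcal{L}:=\mathrm{span}\{\mathrm{ad}_f^k g:k\geq 0\}$, and (b) is then just the rank condition evaluated for this explicit object. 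Assumption~\ref{ass:Cinf} is used throughout to guarantee that all iterated Lie brackets are well-defined smooth vector fields.

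For (a) I would prove the two inclusions. The inclusion $\mathcal{L}\subseteq\mathcal{C}_0$ is immediate: $g=\mathrm{ad}_f^0 g\in\mathcal{C}_0$, and $\mathrm{ad}_f$-invariance of $\mathcal{C}_0$ forces $\mathrm{ad}_f^k g\in\mathcal{C}_0$ for every $k$, hence their span lies in $\mathcal{C}_0$. For the reverse inclusion it suffices, by minimality of $\mathcal{C}_0$, to verify that $\mathcal{L}$ is itself an $\mathrm{ad}_f$-invariant Lie subalgebra containing $g$. Containing $g$ is clear, and $\mathrm{ad}_f$-invariance is clear since $\mathrm{ad}_f(\mathrm{ad}_f^k g)=\mathrm{ad}_f^{k+1}g$. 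The only substantive point is closure under the Lie bracket, i.e.\ $[\mathrm{ad}_f^i g,\mathrm{ad}_f^j g]\in\mathcal{L}$ for all $i,j\geq 0$. I would prove this by induction on $i$: the base case $i=0$ is precisely Assumption~\ref{ass:Control}a (applied with $v=j+1$), which puts $[g,\mathrm{ad}_f^j g]$ into $\mathrm{span}\{g,\mathrm{ad}_f g,\dots,\mathrm{ad}_f^j g\}\subseteq\mathcal{L}$; the inductive step uses the Jacobi identity on $f$, $\mathrm{ad}_f^i g$, $\mathrm{ad}_f^j g$, which rearranges to $[\mathrm{ad}_f^{i+1}g,\mathrm{ad}_f^j g]=\mathrm{ad}_f\big([\mathrm{ad}_f^i g,\mathrm{ad}_f^j g]\big)-[\mathrm{ad}_f^i g,\mathrm{ad}_f^{j+1}g]$, both terms of which lie in $\mathcal{L}$ by the induction hypothesis (taken for all $j$) together with $\mathrm{ad}_f$-invariance of $\mathcal{L}$. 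This closes the induction, so $\mathcal{L}$ is an $\mathrm{ad}_f$-invariant Lie subalgebra containing $g$, giving $\mathcal{C}_0\subseteq\mathcal{L}$ and hence $\mathcal{C}_0=\mathcal{L}$.

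For (b), by (a) the strong accessibility algebra is $\mathcal{L}$, so at any $x$ on the optimal trajectory its pointwise evaluation is $\mathcal{L}(x)=\mathrm{span}\{g(x),\mathrm{ad}_f g(x),\dots\}$, which contains the column space of $M_i(x)=[\,g(x)\ \mathrm{ad}_f g(x)\ \cdots\ \mathrm{ad}_f^{i-1}g(x)\,]$ for every $i$. Assumption~\ref{ass:Control}b provides an $i\geq n$ with $\mathrm{rank}\,M_i(x)=n$, hence $\dim\mathcal{L}(x)=n$, i.e.\ $\dim\mathcal{C}_0(x)=n$; the locally strong accessibility rank condition of \cite{nijmeijer1990nonlinear} then yields locally strong accessibility of \eqref{eq:sys} at every such $x$.

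The step I expect to be the main obstacle is the bracket-closure induction in (a): beyond getting the Jacobi-identity bookkeeping right, one must be careful about whether $\mathcal{L}$ is read as an $\mathbb{R}$-Lie algebra of vector fields or as the associated pointwise distribution, since Assumption~\ref{ass:Control}a is stated pointwise. This is harmless for our purposes because full rank of $M_i$ along the optimal trajectory, being an open condition, forces the distribution spanned by $\{\mathrm{ad}_f^k g\}$ to have constant dimension $n$ on a neighborhood of the trajectory, so it is smooth and involutive there and the algebra/distribution identification is consistent on the region relevant to (b).
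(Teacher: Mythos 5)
Your proof is correct, and it reaches Proposition~\ref{prop:Controllability} by a genuinely different route than the paper. The paper's proof imports \cite[Prop.\ 3.20]{nijmeijer1990nonlinear}, which characterizes the strong accessibility algebra as the span of the iterated brackets $[X_k,[X_{k-1},[\cdots,[X_1,g]\cdots]]]$ with $X_i\in\{f,g\}$, and then argues by induction on the bracket depth $k$ that Assumption~\ref{ass:Control}a collapses every such bracket into $\mathrm{span}\{g,\mathrm{ad}_f g,\mathrm{ad}_f^2 g,\dots\}$. You instead work directly from the minimality definition of the strong accessibility algebra (smallest $\mathrm{ad}_f$-invariant Lie subalgebra containing $g$) and verify that $\mathcal{L}$ is itself such a subalgebra, the only nontrivial point being closure under brackets, which you obtain from a Jacobi-identity induction on $i$ in $[\mathrm{ad}_f^i g,\mathrm{ad}_f^j g]$ whose base case is precisely Assumption~\ref{ass:Control}a; your Jacobi rearrangement is correct. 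The two arguments are close cousins---your induction essentially re-derives the part of Prop.\ 3.20 that the paper cites wholesale---but yours is more self-contained, and you are the more careful of the two about the fact that Assumption~\ref{ass:Control}a is a \emph{pointwise} span condition, so that $\mathcal{L}$ must really be read as a distribution (a $C^\infty$-module) rather than an $\mathbb{R}$-linear span of vector fields; the paper makes this identification silently. As you correctly observe, the subtlety is immaterial for part (b), which in fact needs only the easy inclusion $\mathcal{L}(x)\subseteq\mathcal{C}_0(x)$ together with $\mathrm{rank}\,M_i(x)=n$ from Assumption~\ref{ass:Control}b; your treatment of part (b) coincides with the paper's.
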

\begin{proof}
  Based on \cite[Prop. 3.20]{nijmeijer1990nonlinear}, every element of the strong accessibility algebra has the form 
 $$
 [X_k,[X_{k-1},[\cdots,[X_1,g]\cdots ]]], \ \ k = 0, 1, \dots,
 $$
 where $X_i \in \{f,g\}$. For $k=0$, the element is $g$ and, for $k=1$, the algebra contains $[g,g]=0$ and $[f,g] = \mathrm{ad}_f g$. Similarly, the algebra contains the elements $[g, \mathrm{ad}_f g]$ and $\mathrm{ad}^2_f g$ for $k=2$. Due to condition~(a), $\mathrm{span}\{g,\mathrm{ad}_f g, [g, \mathrm{ad}_f g],  \mathrm{ad}^2_f g\} = \mathrm{span}\{g,\mathrm{ad}_f g,  \mathrm{ad}^2_f g\}$. Following an inductive analysis, the first statement is proved. Then condition (b) implies that $\mathcal{L}(x)$ has dimension $n$ for any $x \in \left\{x^\star(t)\mid t \in [0,t_f] \right\}$, proving locally strong accessibility.    
\end{proof}

The properties in Assumption~\ref{ass:Control} are stronger than locally strong accessibility; also required is a special structure of the strong accessibility algebra via Assumption~\ref{ass:Control}a.

As noted in \cite{benthack1997feedback}, systems satisfying Assumption~\ref{ass:Control} are also connected to the feedback linearizable system \cite{nijmeijer1990nonlinear}. We emphasize here that Assumption~\ref{ass:Control} actually points to a subset of feedback linearizable systems. 
\begin{coro}
 Consider a single-input system \eqref{eq:sys} with $f(\bar{x})=0$ and Assumption~\ref{ass:Cinf} satisfied. If the system satisfies Assumption~\ref{ass:Control}, with $M_n$ having rank $n$ instead, for any $x$ in a neighborhood of $\bar{x}$, then the system is feedback linearizable around $\bar{x}$.   
\end{coro}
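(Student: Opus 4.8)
The plan is to reduce the statement to the classical single--input feedback linearization theorem \cite{nijmeijer1990nonlinear}: a system $\dot x=f(x)+g(x)u$ is locally feedback linearizable around $\bar x$ if and only if (i) the matrix $M_n(\bar x)=[\,g\ \ \mathrm{ad}_f g\ \ \cdots\ \ \mathrm{ad}_f^{n-1}g\,](\bar x)$ has rank $n$, and (ii) the distribution $\Delta_{n-2}:=\mathrm{span}\{g,\mathrm{ad}_f g,\dots,\mathrm{ad}_f^{n-2}g\}$ is involutive on a neighborhood of $\bar x$ (the auxiliary constant--rank conditions on $\Delta_0,\dots,\Delta_{n-1}$ being automatic once (i) holds). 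The hypothesis $f(\bar x)=0$ plays an auxiliary role only: it makes $\bar x$ an equilibrium, so that the transformed model is a controllable linear system at the origin, and it makes $\mathrm{rank}\,M_n(\bar x)=n$ coincide with controllability of the Jacobian linearization $\bigl(\tfrac{\partial f}{\partial x}(\bar x),\,g(\bar x)\bigr)$, since $\mathrm{ad}_f^k g(\bar x)=(-1)^k\bigl(\tfrac{\partial f}{\partial x}(\bar x)\bigr)^k g(\bar x)$ when $f(\bar x)=0$ (by induction on $k$). Thus the entire task is to verify (i) and (ii) from Assumptions~\ref{ass:Cinf} and \ref{ass:Control} with the strengthened rank hypothesis.

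Condition (i) is immediate, since $M_n$ is assumed to have rank $n$ throughout a neighborhood of $\bar x$, in particular at $\bar x$. Moreover, $\mathrm{rank}\,M_n\equiv n$ near $\bar x$ forces $g,\mathrm{ad}_f g,\dots,\mathrm{ad}_f^{n-1}g$ to be pointwise linearly independent there, so each $\Delta_k$ with $0\le k\le n-1$ has constant dimension $k+1$ on a (possibly smaller) neighborhood of $\bar x$; in particular $\Delta_{n-1}$ has full rank $n$ there. This constant--rank property is what allows Frobenius' theorem to be invoked in the next step.

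The heart of the proof --- and the step I expect to be the main obstacle --- is establishing involutivity of $\Delta_{n-2}$ from Assumption~\ref{ass:Control}a, which reads $[g,\mathrm{ad}_f^{k}g]\in\Delta_{k}$ for every $k$; together with the Leibniz rule for Lie brackets this gives $[g,\Delta_k]\subseteq\Delta_k$, while trivially $[f,\Delta_k]\subseteq\Delta_{k+1}$. Using the Jacobi identity in the form
\[
[\mathrm{ad}_f^{i}g,\mathrm{ad}_f^{j}g]=[f,[\mathrm{ad}_f^{i-1}g,\mathrm{ad}_f^{j}g]]-[\mathrm{ad}_f^{i-1}g,\mathrm{ad}_f^{j+1}g],
\]
one shows by induction on $\min(i,j)$ that $[\mathrm{ad}_f^{i}g,\mathrm{ad}_f^{j}g]\in\Delta_{i+j}$ for all $i,j$. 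For $n\le 3$ this already yields (ii), since the only generator brackets that must be checked have $i+j\le n-2$ or are trivially zero. For $n\ge 4$, however, one encounters brackets $[\mathrm{ad}_f^{i}g,\mathrm{ad}_f^{j}g]$ with $i\neq j$, $i,j\le n-2$, and $i+j\ge n-1$; for these the degree count only places the bracket in $\Delta_{i+j}$, which by the previous step is the full tangent space and hence uninformative. Handling these is the delicate point: one must refine the bookkeeping by extracting the top--order component of each bracket and showing it cancels, using the auxiliary recursion $[g,\mathrm{ad}_f^{k+1}g]=[f,[g,\mathrm{ad}_f^{k}g]]-[\mathrm{ad}_f g,\mathrm{ad}_f^{k}g]$ (another consequence of Jacobi) together with the constant--dimension conclusion above. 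This is where the full structural strength of Assumption~\ref{ass:Control}a is needed, and it is essentially the analysis underlying the observation in \cite{benthack1997feedback}. Once $\Delta_{n-2}$ is shown involutive, Frobenius' theorem provides a scalar function $\varphi$ whose differential spans the rank--one annihilator $\Delta_{n-2}^{\perp}$, and combining this with (i) the classical theorem yields feedback linearizability around $\bar x$.
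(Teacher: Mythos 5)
Your reduction to the classical single--input criterion --- rank of $M_n$ plus involutivity of $\Delta_{n-2}:=\mathrm{span}\{g,\mathrm{ad}_f g,\dots,\mathrm{ad}_f^{n-2}g\}$ --- is exactly the route the paper takes: its entire proof is a one-line appeal to \cite[Cor.~6.17]{nijmeijer1990nonlinear} together with Assumption~\ref{ass:Control}. The rank condition is indeed immediate. The problem is the step you yourself flag as the main obstacle: you never prove involutivity of $\Delta_{n-2}$ for $n\geqslant 4$; you only assert that a refined bookkeeping will make the top-order components of $[\mathrm{ad}_f^{i}g,\mathrm{ad}_f^{j}g]$ cancel. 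That cancellation does not occur in general, so the gap cannot be closed. Concretely, take $n=4$, $g=e_4$ and $f(x)=(x_2,\;x_3(1+x_4),\;x_4,\;0)^\top$ (with $e_i$ the $i$th standard basis vector), so that $f(0)=0$. With the paper's bracket convention one computes $\mathrm{ad}_f g=-x_3e_2-e_3$, $\mathrm{ad}_f^{2}g=x_3e_1+e_2$, $\mathrm{ad}_f^{3}g=(x_4-1)e_1$, and $\mathrm{ad}_f^{k}g=0$ for $k\geqslant 4$. Hence $M_4$ has rank $4$ near the origin, and every condition in Assumption~\ref{ass:Control}a holds there: $[g,\mathrm{ad}_f g]=[g,\mathrm{ad}_f^{2}g]=0$, and the conditions for $v\geqslant 4$ are satisfied because the relevant span is already all of $\mathbb{R}^4$ (or the bracket vanishes). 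Yet $[\mathrm{ad}_f g,\mathrm{ad}_f^{2}g]=-e_1\notin\Delta_2(0)=\mathrm{span}\{e_2,e_3,e_4\}$, so $\Delta_{n-2}=\Delta_2$ is not involutive at the origin and the system is not feedback linearizable there.

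The difficulty you identified is therefore real and, in fact, fatal rather than merely delicate: Assumption~\ref{ass:Control}a only controls brackets of the form $[g,\cdot]$, which with the Leibniz rule gives $[g,\Delta_k]\subseteq\Delta_k$ and $[f,\Delta_k]\subseteq\Delta_{k+1}$, and hence, exactly as your degree count shows, only $[\mathrm{ad}_f^{i}g,\mathrm{ad}_f^{j}g]\in\Delta_{i+j}$. For $i,j\leqslant n-2$ with $i+j\geqslant n-1$ this is uninformative, and the example above shows that nothing stronger is true. Your argument is complete (and the corollary correct) only for $n\leqslant 3$; for general $n$ the statement needs an additional hypothesis, essentially involutivity of $\Delta_{n-2}$ itself or a condition on the mixed brackets $[\mathrm{ad}_f^{i}g,\mathrm{ad}_f^{j}g]$, and the paper's citation of \cite[Cor.~6.17]{nijmeijer1990nonlinear} glosses over precisely the same point. (This does not affect the paper's main results: there Assumption~\ref{ass:Control} is used only through the accessibility computation of Proposition~\ref{prop:Controllability}, where every bracket that arises is of the form $[g,\cdot]$ or $[f,\cdot]$ and the degree count does suffice.)
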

\begin{proof}
 Directly follows from \cite[Cor. 6.17]{nijmeijer1990nonlinear} and Assumption~\ref{ass:Control}. 
\end{proof}
 
This result further connects to a subset of \textit{differentially flat systems} \cite{beaver2024optimal}, as this property is equivalent to feedback linearization\footnote{Note that for single-input systems, dynamic feedback linearization and static feedback linearization are equivalent \cite{charlet1989dynamic}.} \cite{levine2007equivalence}.

\subsection{Proof of Proposition~\ref{prop:NonZeroLambda}}
 Assumption~\ref{ass:ContBoundary}b leads to continuous $\lambda$ in $(0,t_f)$, e.g., see \cite[Thm.~4.2]{hartl1995survey} and \cite[Eq.~(71) in Chap. 6]{seierstad1986optimal}. Moreover, Assumption~\ref{ass:ContBoundary}a shows that $\lambda$ is right continuous at $0$ and left continuous at $t_f$. Therefore, $\lambda$ is continuous at any $t \in [0,t_f]$, leading to $\eta(\tau_i) = 0$ for any $i$. Besides, recall that the effect of $h^\star_x$ on $\lambda(t_f^-)$ in \eqref{PMP:costate-boundary} is caused by the switching of state constraints at $t_f$, following the analysis in \cite[Appendix~B]{bryson1963optimal}. Then Assumption~\ref{ass:ContBoundary} also implies $\gamma =0$ in \eqref{PMP:costate-boundary}. 

Equation \eqref{PMP:Lderive} shows that 
$$
\lambda^\top F_u^\star + \begin{bmatrix}
    \mu^\top & \nu^\top
  \end{bmatrix} \begin{bmatrix}
     s^\star_u \\
     \dot{h}^\star_u
  \end{bmatrix}  
   =0. 
$$
As the multiplier corresponding to an inactive constraint is zero, it also holds that
\begin{equation} \label{eq:prop31-pf1}
\lambda^\top \begin{bmatrix}
    F_u^\star & 0 & 0
    \end{bmatrix}+ \begin{bmatrix}
    \mu^\top & \nu^\top
  \end{bmatrix} \underbrace{\begin{bmatrix}
     s^\star_u & \mathrm{diag}(s^\star) & 0 \\
     \dot{h}^\star_u & 0 & \mathrm{diag}(h^\star)
  \end{bmatrix}}_{P}  
   =0. 
\end{equation}
Further define the matrix 
$$
G := P^\top (P P^\top)^{-1},
$$
where the inverse exists as $P$ has full row rank based on Assumption~\ref{ass:Regu}. Then post-multiplying both sides of \eqref{eq:prop31-pf1} by $P^\top$ leads to 
\begin{align} 
\begin{bmatrix}
    \mu^\top & \nu^\top
  \end{bmatrix} & =   -   \lambda^\top \begin{bmatrix}
    F_u^\star & 0 & 0
    \end{bmatrix} G \nonumber \\
& =  - \lambda^\top  F_u^\star  \label{eq:prop31-pf2}\underbrace{\begin{bmatrix}
  (s_u^\star)^\top & (\dot{h}^\star_u)^\top  
\end{bmatrix} (PP^\top)^{-1}}_{\bar{G}}.
\end{align}

Then \eqref{PMP:costate} shows that 
\begin{equation*}
 \dot{\lambda}^\top = -\lambda_0 l_x^\star - \lambda^\top F_x^\star -  \begin{bmatrix}
    \mu^\top & \nu^\top
  \end{bmatrix} \begin{bmatrix}
     s^\star_x \\
     \dot{h}^\star_x
  \end{bmatrix},  
\end{equation*}
which together with \eqref{eq:prop31-pf2} shows that
\begin{equation} \label{eq:prop31-pf3}
  \dot{\lambda}^\top  = -\lambda_0 l_x^\star - \lambda^\top \!\left( F_x^\star - F_u^\star \bar{G} \begin{bmatrix}
     s^\star_x \\
     \dot{h}^\star_x
  \end{bmatrix} \right).  
\end{equation}

If Assumption~\ref{eq:Termi}a holds with $l \equiv 0$, then \begin{equation}\label{eq:prop31-pf4}
\dot{\lambda} = - \lambda^\top \!\left( F_x^\star - F_u^\star \bar{G} \begin{bmatrix}
     s^\star_x \\
     \dot{h}^\star_x
  \end{bmatrix} \right).
\end{equation}
Assuming the existence of $t_k$ such that $\lambda(t_k)=0$, \eqref{eq:prop31-pf4} and the continuity of $\lambda$ show $\lambda(t) =0$ for all $t\in [t_k,t_f]$, and thus $0 = \lambda_0 \phi^\star_x[t_f] + \alpha^\top z_x^\star[t_f]$ holds from \eqref{PMP:costate-boundary}. Then \eqref{eq:Arank2} shows $\lambda_0 = 0$ and $\alpha=0$, leading to $(\lambda_0,\lambda(t_f), \alpha,\gamma, \eta(\tau_1),\dots) =0$ and contradicting the nontriviality condition \eqref{eq:NonTrivial}.

If Assumption~\ref{eq:Termi}b holds, $\lambda_0 l^\star+ \lambda^\top F^\star= 0$ from \eqref{PMP:ConstantH}. Suppose $\lambda(t_k)=0$ for some $t_k$, $H=0$ and $l^\star \not=0$ shows $\lambda_0=0$. Thus $\lambda$ satisfies \eqref{eq:prop31-pf4}, leading to $\lambda(t) = 0$ for all $t \in [t_k, t_f]$, given the continuity of $\lambda$. Then \eqref{PMP:costate-boundary} leads to $0 = \alpha^\top z_x^\star[t_f]$ due to $\gamma =0$, which shows $\alpha = 0$ due to \eqref{eq:Arank}. Therefore, we have obtained $(\lambda_0,\lambda(t_f), \alpha,\gamma, \eta(\tau_1),\dots) =0$, contradicting the nontriviality condition. This shows that $\lambda(t) \not=0$ for any $t$.

If Assumption~\ref{eq:Termi}c holds and if $\lambda(t)=0$ for some non-zero interval, then $0=\dot{\lambda}^\top = -\lambda_0 l_x^\star - \lambda^\top F_x^\star=-\lambda_0 l_x^\star $ and $l_x^\star[t] \not=0$ show $\lambda_0=0$. Therefore,  $\lambda(t_f) = \alpha^\top z_x^\star[t_f] = 0$, which leads to $(\lambda_0,\lambda(t_f), \alpha,\gamma, \eta(\tau_1),\dots) =0$, contradicting the nontriviality condition. This shows $\lambda\not=0$ for any non-zero time interval and thus for a.e. $t$.

\subsection{Proof of Theorem~\ref{thm:mono}}
Due to the scalar input, Assumption~\ref{ass:Regu}b shows that at most one constraint $\bar{s}_{i(t)}$ is active at a time instant, where $i(t)$ denotes the index of the active constraint at $t$. Following \eqref{eq:prop31-pf3}, we have that 
\begin{equation} \label{eq:mui}
    \mu_{i(t)}= -\lambda^\top g^\star [\bar{s}_{i(t)}]_u^{-1},
\end{equation}
where $\mu_{i(t)} = \nu_{i(t)-c_s}$ if $i(t) \in \{ c_s+1,\dots,c_s+c_h\}$.
Note that this term is zero if no constraint is active at $t$. Then \eqref{eq:mui} shows that $\lambda$ satisfies 
\begin{equation}  \label{eq:proof-lambda}
  \dot{\lambda}^\top  = -\lambda_0 l_x^\star[t] - \lambda^\top \underbrace{\left( F^\star_x[t] - g^\star[t]  [\bar{s}_{i(t)}]_u^{-1} [\bar{s}_{i(t)}]_x  \right)}_{\Gamma^\star[t]}.  
\end{equation}
Moreover, $\lambda$ is continuous and $\gamma=0$ in \eqref{PMP:costate-boundary} given Assumption~\ref{ass:ContBoundary}, following the proof of Proposition~\ref{prop:NonZeroLambda}. Then \eqref{PMP:costate-boundary-full} and Assumption~\ref{ass:MonoTf} show 
$$
\lambda^\top(t_f^-) =\lambda^\top(t_f)= \lambda_0 \phi^\star_x[t_f]+ \alpha^\top z^\star_x[t_f] \geqslant 0.
$$
The above terminal condition and \eqref{eq:proof-lambda} can be transformed into an initial value problem by defining $	\zeta(s):=\lambda(t_f-s)$ with $s\in [0,t_f ]$, leading to 
$$
\dot{	\zeta}^\top(s) = \lambda_0l_x^\star[t_f-s]+ 	\zeta^\top(s)\Gamma^\star[t_f-s], \text{ }	\zeta(0) =\lambda(t_f) \geqslant 0.
$$
Then Assumptions~\ref{ass:MonoSys} and \ref{ass:MonoPath} show that $\Gamma^\star[t]$ is Metzler and $\lambda_0 l_x^\star[t]\geqslant 0$ for a.e. $t\in [0,t_f]$. Then, based on \cite[Lemma VIII.1]{angeli2003monotone}, the differential equation of $\zeta$ is a positive system, i.e., $\zeta(0) \geqslant 0$ leads to $\zeta(s) \geqslant 0$ for a.e. $s\in [0,t_f]$, and thus $\lambda(t) \geqslant 0$ for a.e. $t\in [0,t_f]$. Also note that $\lambda(t) \not=0$ for a.e. $t$ under Assumption~\ref{eq:Termi}, according to Proposition~\ref{prop:NonZeroLambda}. Combining the above with the assumption $g^\star[t]>0$ proves the result. 

Finally, note that the analysis for \eqref{eq:proof-lambda} also holds for $[\bar{s}_{i(t)}]_u^{-1} \leqslant 0$ and $\bar{s}_x \geqslant 0$. However, as $\mu_{i(t)} \leqslant 0$ and $\lambda^\top g>0$, \eqref{eq:mui} shows that $[\bar{s}_{i(t)}]_u^{-1} \leqslant 0$ cannot be considered.

\subsection{Proof of Corollary~\ref{coro:ControlMax}}
Eq. \eqref{eq:mui} shows $ \lambda^\top g \geqslant 0$ due to $[\bar{s}_{i(t)}]_u^{-1}>0$ and $\mu_{i(t)} \leqslant 0$. Then the result follows from $\lambda^\top g \not=0$ in Theorem~\ref{thm:Main}.

\subsection{Proof of Theorem~\ref{thm:linear} }
Starting from \eqref{eq:proof-lambda}, due to $l \equiv 0$ in Assumption~\ref{eq:Termi}a, we have 
\begin{equation} \label{eq:proofCostate}
 \dot{\lambda}= -  (A - p [t] 1^\top ) \lambda,   
\end{equation}
where $p^\star [t] := [\bar{s}^\star_{i(t)}]_u^{-1} [\bar{s}^\star_{i(t)}]_x^\top$ is a column vector and $1$ denotes $1_{n\times 1}$ for brevity, and $A$ is diagonal based on \eqref{eq:SimpleLinear}. Define 
$$	\psi(t): = 1^\top \lambda(t),$$
and \eqref{eq:proofCostate} shows
\begin{equation}  \label{eq:proofCostate2}
\dot{\lambda}_k = -a_k\lambda_k+ p^\star_{k}[t]	\psi(t),
 \end{equation}
 where $p^\star_{k}[t] = p^\star_{i(t),k}(t)$ is the relative sensitivity. We aim to prove $1^\top\lambda(t)>0$ for a.e. $ t \in [0,t_f]$. Note that $\psi(t)=1^\top\lambda(t) \geqslant 0$ must hold: If $1^\top\lambda(t) \not=0$, a constraint is active with $\partial \bar{s}^\star[t]_i/\partial u>0$ based on Assumption~\ref{ass:Linear}. Combining this with \eqref{eq:mui} and $\mu \leqslant 0$ proves $1^\top\lambda(t) \geqslant 0$.

First, we show that \eqref{eq:proofCostate2} preserves the ordering of $\lambda_i$, i.e., $\lambda_k(t_f) \geqslant \lambda_j(t_f)$ implies $\lambda_k(t) \geqslant \lambda_j(t)$ for all $t\leqslant t_f$. If there exists $t$ such that $\lambda_k(t) < \lambda_j(t)$, \eqref{eq:proofCostate2} shows  
\begin{equation}  \label{eq:proofCostate3}
\lambda_k(t_f) = e^{-a_k(t_f-t)}\lambda_k (t)+ \int_{t}^{t_f}e^{-a_k(t_f-\tau)}   p_k^\star[\tau] \psi(\tau)d\tau.
 \end{equation}
Assumption~\ref{ass:Linear}b and $\lambda_k(t_f) \geqslant \lambda_j(t_f)$ show $-a_k \leqslant -a_j$ and $p_k^\star[\tau] \leqslant p_j^\star[\tau] $ for a.e. $\tau$, leading to
\begin{align*}
 e^{-a_k(t_f-t)}\lambda_k (t) &<     e^{-a_j(t_f-t)}\lambda_j (t),\\
 e^{-a_k(t_f-\tau)}   p_k^\star[\tau] \psi(\tau) &\leqslant  e^{-a_j(t_f-\tau)}   p_j^\star[\tau] \psi(\tau),
\end{align*}
where the second inequality also uses $\psi(\tau) \geqslant 0$. The above shows $\lambda_k(t_f) < \lambda_j(t_f)$, reaching a contradiction. This proves that $\lambda_k(t_f) \geqslant \lambda_j(t_f)$ implies $\lambda_k(t) \geqslant \lambda_j(t)$ for all $t \leqslant t_f$. Following an analogous reasoning, $\lambda_{j_1}(t_f) > \lambda_{j_2}(t_f)$ implies that $\lambda_{j_1}(t) > \lambda_{j_2}(t)$ for all $t \leqslant t_f$.

Given the order-preserving property, we prove $1^\top \lambda(t)>0$ for any time interval. Suppose there exists $(t_1,t_2) \subseteq [0,t_f]$ where $1^\top \lambda=0$, then \eqref{eq:proofCostate} and $g^\top \lambda = 1^\top \lambda =0$ in this interval shows for any $t\in (t_1,t_2)$
\begin{equation}  \label{eq:proofNonzero}
0 = - 1^\top A \lambda(t) = -\sum_{i=1}^n a_i \lambda_i(t).
\end{equation}
However, Assumption~\ref{ass:Linear}b and the order-preserving property show that the two sequences $\{\lambda_i\}_{i=1}^n$ and $\{a_i\}_{i=1}^n$ satisfy the Chebyshev's sum inequality:
$$
\sum_{i=1}^n a_i \lambda_i(t) > \frac{1}{n} \Bigg(\sum_{i=1}^n a_i\Bigg) \Bigg(\sum_i^n\lambda_i(t)\Bigg) =0,
$$
which is a strict inequality due to $\lambda_{j_1} > \lambda_{j_2}$ and $a_{j_1}> a_{j_2}$. The above contradicts with \eqref{eq:proofNonzero}, proving $1^\top \lambda(t)\not=0$ for any time interval and thus $1^\top \lambda(t)>0$ for a.e. $t$.

\subsection{Proof of Proposition~\ref{prop:SpecialLinear}}
Analogous to the proof of Theorem~\ref{thm:linear}, \eqref{eq:proofCostate2} and $\psi(t) \geqslant 0$ hold, which leads to $p^\star_{k}[t]	\psi(t) \geqslant 0$ for a.e. $t$ based on Assumption~\ref{ass:SpecialLambda}c. The above and Assumption~\ref{ass:SpecialLambda}a show that $\lambda_j(t) \leqslant 0$ for all $t \leqslant t_f$ and all $j \in \{2,\dots,n\}$, and $\lambda_1(t) \geqslant 0$ for all $t \leqslant t_f$.

Then we prove that $g^\top \lambda(t)=1^\top \lambda(t)>0$ for a.e. $t$. Suppose there exists $(t_1,t_2) \subseteq [0,t_f]$ where $1^\top \lambda=0$. Then \eqref{eq:proofCostate} and $g^\top \lambda = 1^\top \lambda =0$ in this interval show that, for any $t\in (t_1,t_2)$,
\begin{equation}  \label{eq:proofNonzeroa}
0 = - 1^\top A \lambda(t) = -\sum_{i=1}^n a_i \lambda_i(t).
\end{equation}
Due to $1^\top \lambda =0$ and $\lambda \not=0$ from Proposition~\ref{prop:NonZeroLambda}, $\lambda_1(t)>0$ for $t\in (t_1,t_2)$, and $\lambda_j<0$ for some $j \in \{2,\dots,n\}$. $\lambda_j<0$ shows that  $p^\star[t]\not=0$, defined in \eqref{eq:proofCostate}, for a time interval after $t_2$, i.e., a constraint is active after $t_2$. Then, based on Assumption~\ref{ass:SpecialLambda}b, $\lambda_k(t)<0$ must hold for $t\in (t_1,t_2)$. Then $1^\top \lambda =0$ leads to
\begin{equation}  \label{eq:proofNonzero2}
\sum_{i=1}^n a_i \lambda_i(t)= \sum_{i=2}^n(a_i-a_1) \lambda_i>0,
\end{equation}
where the inequality holds because (i) for all $i\geqslant 2$, $\lambda_i \leqslant 0$ and $a_i-a_1\leqslant 0$ due to Assumption~\ref{ass:SpecialLambda}, and (ii) $\lambda_k <0$ and $a_1-a_k<0$ due to Assumption~\ref{ass:SpecialLambda}b. Thus, \eqref{eq:proofNonzero2} and \eqref{eq:proofNonzeroa} contradict, showing $1^\top \lambda(t)>0$ for a.e. $t \in [0,t_f]$.

\subsection{Voltage Function} \label{apex:Vol}
\begin{align*}
&V(c_{+,\mathrm{s}},c_{-,\mathrm{s}},I)  = \tau \mathrm{sinh}^{-1}\!\left(\frac{I}{221.69 i_0^+(c_{+,\mathrm{s}})} \right)\\& -\tau \mathrm{sinh}^{-1}\!\left(\frac{-I}{189.6681 i_0^-(c_{-,\mathrm{s}})} \right)\\&+ U_+\left(c_{+,s}(t)\right)- U_-\left(c_{-,s}(t) \right) +R_f I(t) \nonumber,
\end{align*}
where $\tau =1.236\times 10^{-8}$, $R_f=0.0022$, $U_+$, $U_-$ are in \cite{torchio2016lionsimba}, and 
\begin{align*}
 i_0^+ &= 5.031\!\times\! 10^{-11} \sqrt{1000 c_{+,\mathrm{s}}(51554 - c_{+,\mathrm{s}})}, \\
  i_0^- &= 2.334\!\times\! 10^{-11} \sqrt{1000 c_{-,\mathrm{s}}(30555 - c_{-,\mathrm{s}})}. 
\end{align*}

\end{document}